\documentclass[12pt]{amsart}
\usepackage{}

\usepackage{amsmath}
\usepackage{amsfonts}
\usepackage{amssymb}
\usepackage[all]{xy}           %xypic macro for latex2.09

\usepackage{bbding}
\usepackage{txfonts}
\usepackage{amscd}

\usepackage[shortlabels]{enumitem}
\usepackage{ifpdf}
\ifpdf
  \usepackage[colorlinks,final,backref=page,hyperindex]{hyperref}
\else
  \usepackage[colorlinks,final,backref=page,hyperindex,hypertex]{hyperref}
\fi
\usepackage{tikz}
\usepackage[active]{srcltx}

\makeatletter

\newtheorem{thm}{Theorem}[section]

\newtheorem{pro}[thm]{Proposition}
\newtheorem{ex}[thm]{Example}
\newtheorem{rmk}[thm]{Remark}
\newtheorem{defi}[thm]{Definition}

\newcommand {\emptycomment}[1]{}

\newcommand{\lon }{\,\rightarrow\,}
\newcommand{\be }{\begin{equation}}
\newcommand{\ee }{\end{equation}}

\newcommand{\g}{\mathfrak g}
\newcommand{\h}{\mathfrak h}

\newcommand{\huaB}{\mathcal{B}}%{{\mathcal{E}}}%{\mathcal{B}}

\newcommand{\frkk}{\mathfrak k}

\newcommand{\dM}{\mathrm{d}}

\newcommand{\Id}{{\rm{Id}}}

\newcommand{\br}[1]{   [ \cdot,    \cdot  ]   }

\newcommand{\Hom}{\mathrm{Hom}}

\newcommand{\Ad}{\mathrm{Ad}}

\newcommand{\gl}{\mathfrak {gl}}

\newcommand{\GL}{\mathrm{GL}}

\begin{document}

\title{The van Est theorem for Rota-Baxter Lie groups}

\author{Jun Jiang}
\address{Department of Mathematics, Jilin University, Changchun 130012, Jilin, China}
\email{junjiang@jlu.edu.cn}

%\date{\today}

\begin{abstract}
Rota–Baxter Lie groups are the group-level counterparts of Rota–Baxter Lie algebras of weight $1$. In this paper, we first establish a correspondence between representations of Rota–Baxter Lie groups and representations of Rota–Baxter Lie algebras of weight $1$ via differentiation and integration. We then develop a cohomology theory for Rota–Baxter Lie groups with coefficients in a representation by constructing an explicit cochain complex in arbitrary degrees. We justify this theory by proving a van Est type theorem, which relates the cohomology of a Rota–Baxter Lie group to that of its Rota–Baxter Lie algebra of weight $1$. As an application, we prove that equivalence classes of central extensions of a Rota–Baxter Lie group are classified by the second cohomology group.
\end{abstract}

\subjclass[2020]{22E60, 17B38, 20J06, 17B56}

\keywords{Rota–Baxter Lie group, cohomology, van Est map, central extension}

\maketitle

%\tableofcontents

%\setcounter{section}{0}

\allowdisplaybreaks

%\end{document}

\section{Introduction}
Rota-Baxter operators on associative algebras were introduced by G. Baxter in his study of fluctuation
theory~\cite{Bax} and were developed in combination theory by Rota and
Cartier~\cite{Ra1, Ra2, Car}. They have since found applications in
Connes-Kreimer's renormalization theory in quantum field
theory~\cite{CK}. On a Lie algebra, a Rota-Baxter operator of weight $\lambda$ is a linear map satisfying a quadratic relation that is the Lie algebraic analogue of the
Rota-Baxter relation on associative algebras, the latter being an abstraction of the integration by parts formula. These operators are closely related to the classical Yang-Baxter equation and its variants: weight $0$ operators correspond to skew-symmetric solutions of the classical Yang-Baxter equation~\cite{STS}, while weight $1$ operators correspond to the modified Yang-Baxter equation and to factorizable Lie bialgebras~\cite{LS}. Lie groups with zero Schouten curvature also induce weight $1$ operators~\cite{KM}.

The group-level counterpart is the notion of a Rota-Baxter operator on a Lie group, introduced in~\cite{GLS, JSZ}. It is a smooth map of the group that is compatible with the group multiplication in a twisted sense. Such operators produce solutions of the
Yang-Baxter equation~\cite{BV, J}, and they are closely tied to group factorizations, skew braces and integrable systems~\cite{GLS, BNMY, CS, NM}. The Lie group and Lie algebra notions are linked by Lie's classical
differentiation-integration correspondence. Differentiating a
Rota-Baxter operator on a Lie group at the identity yields a
Rota-Baxter operator of weight $1$ on its Lie algebra. Conversely, a weight $1$ Rota-Baxter operator on a Lie algebra integrates to a local Rota-Baxter operator on the corresponding Lie group. However, as is typical for integration problems, the passage from local to global is in
general obstructed~\cite{JSZ, JSZ1}. Thus, Rota–Baxter Lie groups carry global information that is not captured by their infinitesimal counterparts, making it natural to investigate their cohomological properties.

The aim of this paper is to develop a cohomology theory for Rota–Baxter Lie groups and to relate it to the cohomology of Rota–Baxter Lie algebras of weight $1$ through a van Est type theorem. As an application, we establish a classification of central extensions of Rota–Baxter Lie groups in terms of the second cohomology group.

Cohomology theories for Rota–Baxter structures have been extensively studied in connection with deformation theory and extension problems. For Rota–Baxter operators of weight $0$, the cohomology and deformation theory of Rota-Baxter operators of weight $0$ on Lie algebras have been developed in \cite{TBGS}. Cohomology theories for relative Rota–Baxter Lie algebras, together with their applications to abelian extensions and higher algebraic structures, have also been investigated in \cite{JS}. See \cite{LaST,Sheng} for further developments concerning deformations, cohomology and homotopy theory. For arbitrary weights, Das \cite{DAS} introduced a cohomology theory for weighted Rota–Baxter Lie algebras with coefficients in a representation and investigated its applications to abelian extensions and formal deformations. An important feature of this theory is that its cochain complex is constructed as the mapping cone of a cochain map between the Chevalley–Eilenberg complexes of the underlying Lie algebra and its descendant Lie algebra. This construction naturally incorporates both the Lie algebra structure and the compatibility conditions imposed by the Rota–Baxter operator.

At the group level, a low-dimensional cohomology theory for Rota–Baxter groups was developed in \cite{DR} to study extensions and automorphisms, with the second cohomology group classifying abelian extensions. This approach was extended to relative Rota–Baxter groups in \cite{BRS}, where a corresponding extension-classification theorem was established. However, these cohomology theories are formulated only in low degrees. This motivates the construction of a cohomology theory for Rota–Baxter Lie groups in arbitrary degrees, together with a systematic comparison with the cohomology of their infinitesimal counterparts.

Motivated by the cohomology theory of weighted Rota–Baxter Lie algebras, we construct a cochain complex for Rota–Baxter Lie groups with coefficients in a representation by means of a mapping cone. The construction is based on a cochain map relating the differentiable group cohomology of the underlying Lie group to that of its descendant Lie group. In this way, we obtain a cohomology theory in arbitrary degrees, which simultaneously incorporates the underlying group structure and the Rota–Baxter operator. Moreover, the mapping-cone description gives rise to a long exact sequence relating the cohomology of Rota–Baxter Lie groups, the cohomology associated with Rota–Baxter operators, and ordinary Lie group cohomology.

A natural question is how the resulting cohomology theory is related to that of the corresponding Rota–Baxter Lie algebra. The classical van Est theorem provides a fundamental connection between the differentiable cohomology of a Lie group and the Chevalley–Eilenberg cohomology of its Lie algebra~\cite{Van}. More precisely, under suitable connectivity assumptions, it establishes an isomorphism between these cohomology groups in a range of degrees. This comparison has important consequences in Lie theory. In particular, by identifying second cohomology classes on the Lie algebra side with suitable central extensions and integrating these classes to the group level, the van Est theorem enters a cohomological proof of Lie's third theorem, namely that every finite-dimensional Lie algebra can be integrated to a Lie group. In the context of Rota–Baxter structures, van Est type theorems have been established for the cohomology of relative Rota–Baxter operators on Lie groups and Lie algebras \cite{JSZ}, as well as for difference Lie groups and their corresponding Lie algebras \cite{JLS}. Motivated by these developments, we first establish a correspondence between representations of Rota–Baxter Lie groups and representations of Rota–Baxter Lie algebras of weight $1$ via differentiation and integration. We then prove a van Est type theorem relating the cohomology of a Rota–Baxter Lie group to that of its associated Rota–Baxter Lie algebra of weight $1$.

Finally we study central extensions of Rota-Baxter Lie groups as applications. We show that central extensions of Rota-Baxter Lie groups are classified by the second cohomology group given above. However, it remains unclear whether the cochain complex introduced here can also be used to classify abelian extensions of Rota–Baxter Lie groups.

The paper is organized as follows. In Section \ref{repdi}, we establish a correspondence between representations of Rota-Baxter Lie groups and representations of Rota-Baxter Lie algebras of weight $1$ via differentiation and integration (Proposition \ref{diffr}, Proposition \ref{intrep}). In Section \ref{seccoh}, we use mapping cone construction to develop a cohomology theory for Rota–Baxter Lie groups with coefficients in a representation, by constructing an explicit cochain complex in arbitrary degrees. The relation among cohomology groups of Rota-Baxter Lie groups, cohomology groups of Rota-Baxter operators and cohomology groups of Lie groups is described by a long exact sequence (Theorem \ref{logco}). In Section \ref{secvan}, we prove a van Est type theorem
relating the cohomology of Rota-Baxter Lie groups to that of Rota-Baxter Lie algebras of weight $1$ (Theorem \ref{van1}). In Section \ref{secch2}, we classify central extensions of Rota-Baxter Lie groups in terms of the second cohomology group introduced in Section \ref{seccoh} (Theorem \ref{cenH2}).
%\vspace{2mm}

%In this paper, we work over an field $\R$ and all the vector spaces are over $\R$ and finite-dimensional.

\vspace{2mm}
\noindent
{\bf Acknowledgements. } This research is supported by NSFC (12401076).

\section{Differentiation and Integration between Representations of Rota-Baxter Lie Groups and Rota-Baxter Lie Algebras}\label{repdi}
In this section, we first recall the notions of representations of Rota-Baxter Lie groups and Rota-Baxter Lie algebras. Then we establish the relationship between representations of Rota-Baxter 
Lie groups and representations of Rota-Baxter Lie algebras via diﬀerentiation and integration.
\begin{defi}{\rm(\cite{GLS})}
	 \begin{enumerate}
		\item
		Let $(G, \cdot_G, e_G)$ be a Lie group.  A smooth map $\huaB: G\to G$ is called  a {\bf Rota-Baxter operator} on $G$ if the following equality holds:
		\begin{equation}\label{defiB}
		\huaB(g)\cdot_G\huaB(h)=\huaB(g\cdot_G\Ad_{\huaB(g)}h), \quad \forall g, h\in G.
		\end{equation}
		The pair $(G, \huaB)$ is called a Rota-Baxter Lie group.
	\item Let $(G, \huaB)$  and $(G', \huaB')$ be Rota-Baxter Lie groups. A smooth map $\Psi:G\to G'$ is called a {\bf Rota-Baxter Lie group homomorphism} if $\Psi$ is a Lie group homomorphism and 
    \begin{equation*}
	\Psi\circ \huaB=\huaB'\circ \Psi.
	\end{equation*}
	 \end{enumerate}
\end{defi}
\begin{ex}
	Let $(G, \cdot_G, e_G)$ be a Lie group. Then $(G, (\bullet)^{-1})$ is a Rota-Baxter Lie group, where $(\bullet)^{-1}:G\to G$ is the inverse map of $G$.
\end{ex}

\begin{ex}
	Let $V$ be a finite dimensional vector space and $T: V\to V$ a linear map. Then $(V, T)$ is a Rota-Baxter Lie group.
\end{ex}

In the following, we recall the infinitesimal structure of Rota-Baxter Lie groups.
\begin{defi}{\rm(\cite{BGN}, \cite{BD})}
	Let $(\g, [\cdot, \cdot]_\g)$ be a Lie algebra. A linear map $B:\g\to\g$ is called a {\bf Rota-Baxter operator of weight $1$} on $\g$ if 
	\begin{equation*}
		[B(x), B(y)]_\g=B([B(x), y]_\g+[x, B(y)]_\g+[x, y]_\g), \quad \forall x, y\in\g.
	\end{equation*}
	The pair $(\g, B)$ is called a {\bf Rota-Baxter Lie algebra of weight $1$.}
\end{defi}

\begin{rmk}
	In the sequel, we will call Rota-Baxter Lie algebras of weight $1$ simply Rota-Baxter Lie algebras when there is no ambiguity.
\end{rmk}

There is a differentiation relation between Rota-Baxter Lie groups and Rota-Baxter Lie algebras as follows.
\begin{thm}{\rm(\cite{GLS})}
	Let $(G, \huaB)$ be a Rota-Baxter Lie group. Denote by $(\g, [\cdot, \cdot]_\g)$ the Lie algebra of $(G, \cdot_G, e_G)$. Then $(\g, \huaB_{*e_G})$ is a Rota-Baxter Lie algebra.
\end{thm}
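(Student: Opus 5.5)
Write $B=\huaB_{*e_G}$ and differentiate the defining identity \eqref{defiB} twice, once in each variable. The first thing to record is that $\huaB(e_G)=e_G$. To see this, put $g=h=e_G$ in \eqref{defiB}. This gives $\huaB(e_G)\huaB(e_G)=\huaB(\Ad_{\huaB(e_G)}e_G)=\huaB(e_G)$, hence $\huaB(e_G)=e_G$. So $B$ is a linear map $\g\to\g$.

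First take $h=\exp(sy)$ and differentiate \eqref{defiB} at $s=0$. The left side gives $\huaB(g)\cdot\frac{d}{ds}\big|_{0}\huaB(\exp sy)$ in left-translated form. More convenient is to use $g=\exp(tx)$, $h=\exp(sy)$ and compute the mixed derivative $\frac{\partial^2}{\partial t\,\partial s}\big|_{0,0}$ of both sides, after composing with a chart, or better, after acting by a faithful viewpoint. I would phrase this through the standard fact that for smooth curves $a(t),b(s)$ with $a(0)=b(0)=e_G$ one has
\[
\frac{\partial^2}{\partial t\partial s}\Big|_{0}\big(a(t)b(s)a(t)^{-1}b(s)^{-1}\big)=[a'(0),b'(0)]_\g .
\]
So I will rewrite \eqref{defiB} as a commutator-type identity. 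Put $u=\huaB(g)$, so that $\Ad_{u}h=uhu^{-1}$. Then \eqref{defiB} says
\[
\huaB(g)\huaB(h)=\huaB(g\,\huaB(g)\,h\,\huaB(g)^{-1}).
\]
Also note $\huaB(g)\huaB(h)=\huaB(g)\huaB(h)\huaB(g)^{-1}\huaB(h)^{-1}\cdot\huaB(h)\huaB(g)$.

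The cleaner route is to differentiate in $s$ first at $s=0$, for fixed $g$. The left side gives $(L_{\huaB(g)})_*By$. The right side gives $\huaB_{*g}\big((L_g)_*\Ad_{\huaB(g)}y\big)$. This yields the identity
\[
(L_{\huaB(g)^{-1}})_*\,\huaB_{*g}\,(L_g)_*\,\Ad_{\huaB(g)}y=By,\qquad \forall g\in G,\ y\in\g .
\]
It says that the map $\phi(g):=(L_{\huaB(g)^{-1}})_*\huaB_{*g}(L_g)_*:\g\to\g$ satisfies $\phi(g)=B\circ\Ad_{\huaB(g)^{-1}}$.

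Now put $g=\exp(tx)$ and differentiate in $t$ at $t=0$. The right side gives $-B[Bx,y]$ after applying it to a vector $z$, using $\frac{d}{dt}\Ad_{\huaB(\exp tx)^{-1}}=-\mathrm{ad}_{Bx}$. For the left side, $\phi(\exp tx)z=\frac{\partial}{\partial s}\big|_0\,\huaB(\exp tx)^{-1}\huaB(\exp(tx)\exp(sz))$. Its $t$-derivative is a mixed second derivative of the two-variable map $F(t,s)=\huaB(\exp tx)^{-1}\huaB(\exp tx\exp sz)$, which satisfies $F(t,0)=e_G$ and $F(0,s)=\huaB(\exp sz)$.

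The main obstacle is extracting the Lie bracket from this mixed derivative, because second derivatives on a group are not intrinsic. I would handle it by working in a local chart given by $\exp$, or by embedding in a matrix group via a faithful representation locally. In canonical coordinates $\huaB$ has a second-order Taylor expansion $\huaB(\exp X)=\exp(BX+\tfrac12 Q(X)+O(X^3))$ with $Q$ quadratic. Expanding $F$ with the Baker–Campbell–Hausdorff formula to bilinear order in $(t,s)$, the symmetric quadratic part $Q$ contributes only through $Q(tx+sz)-Q(tx)$. Its mixed term is the polarization $\tilde Q(x,z)$. The BCH corrections contribute $\tfrac12[x,z]$ from $\exp tx\exp sz$ and $\tfrac12\cdot(-[Bx,Bz])$-type terms.

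Collecting the bilinear terms gives
\[
\tilde Q(x,z)+\tfrac12 B[x,z]-\tfrac12[Bx,Bz]=-B[Bx,z].
\]
I would then swap the roles of $x$ and $z$ and subtract. The symmetric $\tilde Q$ cancels, leaving $B[x,z]-[Bx,Bz]=-B[Bx,z]-B[x,Bz]$, after accounting for antisymmetry. Rearranged, this is exactly the weight $1$ Rota-Baxter identity. Getting the coefficients in this BCH bookkeeping right is where I expect care to be needed.
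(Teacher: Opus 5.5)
The paper gives no proof of this theorem: it is quoted from \cite{GLS} and used as a black box, so there is no in-paper argument to compare with step by step. Your proof is correct. The steps check out: $\huaB(e_G)=e_G$, the first-order identity $\huaB_{*g}(L_g)_*\Ad_{\huaB(g)}y=(L_{\huaB(g)})_*By$, and the $t$-derivative $-B[Bx,z]$ of the right side. The BCH bookkeeping is also right. With $\log\circ\huaB\circ\exp=B+\tfrac12 Q+O(3)$ near $0$, one gets
\[
\log F(t,s)=sBz+ts\Big(\tilde Q(x,z)+\tfrac12 B[x,z]-\tfrac12[Bx,Bz]\Big)+\tfrac12 s^2Q(z)+O(3).
\]
So your collected equation has the right coefficients, and antisymmetrizing in $(x,z)$ gives exactly $[Bx,Bz]=B([Bx,z]+[x,Bz]+[x,z])$.

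One clarification: the mixed derivative you need is intrinsic. Since $F(t,0)=e_G$ for all $t$, the map $t\mapsto\partial_sF(t,0)$ is a curve in the fixed space $\g$. The chart is only needed to compute it, and $\partial_sF(t,0)=\partial_s\log F(t,0)$ because $d\exp_0=\Id$. The opening detours (the commutator formula, the faithful-representation remark) are never used and can be dropped.

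Your Step 2 identity actually finishes the proof on its own, with no second-order data of $\huaB$. Since $a\star b=a\huaB(a)b\huaB(a)^{-1}$, the field $\tilde x(g)=(L_g)_*\Ad_{\huaB(g)}x$ is the left-invariant vector field of the descendent group $G_\huaB=(G,\star)$ with value $x$ at $e_G$. Your identity says precisely that $\tilde x$ is $\huaB$-related to the left-invariant field $\widehat{Bx}$ of $G$. Brackets of related fields are related, and in the left trivialization
\[
[\tilde x,\tilde y](e_G)=[x,y]_\g+\frac{d}{dt}\Big|_{t=0}\Ad_{\huaB(\exp tx)}y-\frac{d}{dt}\Big|_{t=0}\Ad_{\huaB(\exp ty)}x=[x,y]_\g+[Bx,y]_\g+[x,By]_\g .
\]
Applying $\huaB_{*e_G}=B$ then gives $B[x,y]_B=[Bx,By]_\g$ directly. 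This is the infinitesimal form of an observation implicit in Section 3 of the paper: \eqref{defiB} says $\huaB(a\star b)=\huaB(a)\huaB(b)$. So $\huaB:G_\huaB\to G$ is a Lie group homomorphism, and its differential must be a Lie algebra homomorphism $\g_B\to\g$.

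Your BCH route is self-contained and elementary, but it needs the antisymmetrization trick to kill the unknown second-order term $\tilde Q$. The homomorphism route uses only first derivatives and hides the second-order analysis in the standard naturality of the bracket. It also explains why the weight-$1$ bracket $[\cdot,\cdot]_B$ appears.
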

Conversely, every Rota-Baxter operator on a Lie algebra can be locally integrated to a Rota-Baxter operator on the corresponding connected and simply connected Lie group, but there sometimes exist obstructions to extending a local Rota-Baxter operator to a global one. For the local integration of Rota-Baxter operators, as well as criteria for determining whether such a local integration can be extended globally, we refer the reader to \cite{JSZ} and \cite{JSZ1}.

Recall representations of Rota-Baxter Lie groups and Rota-Baxter Lie algebras as follows. For further details, see \cite{DR}, \cite{DAS} and \cite{JS}.

	\begin{defi}{\rm(\cite{DR})}\label{defirep}
	Let $V$ be a vector space and $T: V\to V$ a linear map. A {\bf representation} of a Rota-Baxter Lie group $(G, \huaB)$ on $(V, T)$ is a Lie group homomorphism $\Theta$ from $G$ to $\GL(V)$ such that 
	\begin{equation}\label{rep}
		\Theta(\huaB(a))T(u)=T\Big(\Theta(a\cdot_G\huaB(a))u+\Theta(a\cdot_G\huaB(a))T(u)-\Theta(\huaB(a))T(u)\Big),\quad \forall a\in G, ~u\in V.
	\end{equation}
	We denote a representation by $(V, T, \Theta)$.
\end{defi}

\begin{rmk}
In \cite{DR}, the notion of a representation of a Rota-Baxter Lie group $(G, \huaB)$ introduced by the authors requires $V$ to be a right $G$-module. By converting between left and right $G$-modules, one can show that the definition given above is equivalent to the one in \cite{DR}.
\end{rmk}

In the following, we construct the semidirect product in the context of Rota-Baxter Lie groups in order to characterize representations of Rota-Baxter Lie groups.

\begin{pro}\label{semirb}
Let $(G, \huaB)$ be a Rota-Baxter Lie group and $\Theta: G\to \GL(V)$ a representation of $G$. Let $T: V\to V$ be a linear map. Consider the semidirect product Lie group $G\ltimes_\Theta V$, whose multiplication $\cdot_\ltimes$ is given by
	\begin{equation*}
		(a, u)\cdot_{\ltimes}(b, v)=(a\cdot_G b, u+\Theta(a)v), \quad \forall a, b\in G, u, v\in V,
	\end{equation*}
Define $\mathrm{B}: G\times V\to G\times V$ by
	\begin{equation*}
		\mathrm{B}(a, u)=\Big(\huaB(a), \Theta(\huaB(a))T(\Theta(\huaB(a)^{-1}\cdot_G a^{-1})u)\Big),\quad \forall a\in G, u\in V.
	\end{equation*}
Then $(G\ltimes_\Theta V, \mathrm{B})$ is a Rota-Baxter Lie group if and only if $(V, \Theta, T)$ is a representation of $(G, \huaB)$.
\end{pro}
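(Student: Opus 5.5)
The plan is a direct verification of the defining identity \eqref{defiB} for $\mathrm{B}$ on $G\ltimes_\Theta V$, followed by a reduction of the resulting condition to \eqref{rep}. In $G\ltimes_\Theta V$ we have $(c,w)^{-1}=(c^{-1},-\Theta(c^{-1})w)$, and hence
\[
\Ad_{(c,w)}(b,v)=\big(c\cdot_G b\cdot_G c^{-1},\; w+\Theta(c)v-\Theta(c\cdot_G b\cdot_G c^{-1})w\big).
\]
I would compute both sides of $\mathrm{B}(a,u)\cdot_\ltimes \mathrm{B}(b,v)=\mathrm{B}\big((a,u)\cdot_\ltimes \Ad_{\mathrm{B}(a,u)}(b,v)\big)$, with $c=\huaB(a)$ and $w=\Theta(c)T(\Theta(c^{-1}a^{-1})u)$. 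The $G$-components agree because $\huaB$ is a Rota-Baxter operator on $G$. Set $g=a\cdot_G\Ad_c b$, so that $\huaB(g)=\huaB(a)\huaB(b)$.

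The $V$-components are linear in $(u,v)$, so I would split the identity into the part with $u=0$ and the part with $v=0$.

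\textbf{The case $u=0$.} Here $w=0$. The right-hand side becomes $\Theta(c\huaB(b))T\big(\Theta(\huaB(g)^{-1}g^{-1}ac)v\big)$. A direct computation gives $g^{-1}ac=cb^{-1}$, and so $\huaB(g)^{-1}g^{-1}ac=\huaB(b)^{-1}b^{-1}$. This case therefore holds identically, with no condition on $T$.

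\textbf{The case $v=0$.} Substitute $u=\Theta(ac)u'$, so that $w=\Theta(c)Tu'$, and use $c^{-1}g^{-1}ac=b^{-1}$. After cancelling $\Theta(c)$, the identity becomes: for all $b\in G$ and $u'\in V$,
\begin{equation*}
\Theta(\huaB(b)^{-1})Tu'=T\Big(\Theta\big((b\huaB(b))^{-1}\big)(u'+Tu')-\Theta(\huaB(b)^{-1})Tu'\Big).
\end{equation*}
So $\mathrm{B}$ is a Rota-Baxter operator if and only if this condition holds for all $b\in G$.

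\textbf{Matching with \eqref{rep}.} This is the main obstacle, because the condition above involves $\huaB(b)^{-1}$ and $(b\huaB(b))^{-1}$, whereas \eqref{rep} involves $\huaB(a)$ and $a\huaB(a)$. I would resolve this with the inverse in the descendant group $(G,\circ)$, where $a\circ h=a\cdot_G\Ad_{\huaB(a)}h$. First, $\huaB(e_G)=e_G$: set $g=h=e_G$ in \eqref{defiB}. Put $b^\dagger=\huaB(b)^{-1}b^{-1}\huaB(b)$. Then $b\circ b^\dagger=e_G$, so $\huaB(b)\huaB(b^\dagger)=\huaB(e_G)=e_G$, i.e.\ $\huaB(b^\dagger)=\huaB(b)^{-1}$. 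Consequently $b^\dagger\huaB(b^\dagger)=\huaB(b)^{-1}b^{-1}=(b\huaB(b))^{-1}$.

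The condition is thus exactly \eqref{rep} evaluated at $a=b^\dagger$. Since $b\mapsto b^\dagger$ is inversion in the group $(G,\circ)$, it is a bijection of $G$. Hence the condition holds for all $b$ if and only if \eqref{rep} holds for all $a$, which proves both directions. Smoothness of $\mathrm{B}$ is clear from its formula.
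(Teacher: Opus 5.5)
Your proof is correct and follows essentially the same route as the paper. You compute both sides of the Rota--Baxter identity on $G\ltimes_\Theta V$, make the same substitution $u'=\Theta\big((a\cdot_G\huaB(a))^{-1}\big)u$ (the paper's $w$), and match the resulting condition with \eqref{rep} via the bijection $b\mapsto b^\dagger=\huaB(b)^{-1}\cdot_G b^{-1}\cdot_G\huaB(b)$ and the identity $\huaB(b^\dagger)=\huaB(b)^{-1}$. You also supply details the paper leaves implicit or imports from an earlier reference: the linearity split showing the $v$-terms hold identically, and a self-contained derivation of $\huaB(b^\dagger)=\huaB(b)^{-1}$ and of bijectivity from $\huaB(e_G)=e_G$ and the descendant group.
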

\begin{proof}
	On the one hand, for any $a, b\in G, u, v\in V$, we have
	\begin{equation*}
		\mathrm{B}(a, u)\cdot_{\ltimes}\mathrm{B}(b, v)=\Big(\huaB(a)\cdot_G\huaB(b), \Theta(\huaB(a))T(\Theta(\huaB(a)^{-1}\cdot_G a^{-1})u)+\Theta(\huaB(a)\cdot_G\huaB(b))T(\Theta(\huaB(b)^{-1}\cdot_G b^{-1})v)\Big).
	\end{equation*}
On the other hand, 
\begin{align*}
&\quad \mathrm{B}\Big((a, u)\cdot_{\ltimes}\mathrm{B}(a, u)\cdot_{\ltimes}(b, v)\cdot_{\ltimes}\mathrm{B}(a, u)^{-1}\Big)\\
&=\Big(\huaB(a\cdot_G\huaB(a)\cdot_G b\cdot_G\huaB(a)^{-1}), \Theta(\huaB(a\cdot_G\huaB(a)\cdot_G b\cdot_G\huaB(a)^{-1}))T\Big(\Theta(\huaB(b)^{-1}\cdot_G b^{-1}\cdot_G\huaB(a)^{-1}\cdot_G a^{-1})u\\
&\quad+\Theta(\huaB(b)^{-1}\cdot_G b^{-1})T(\Theta(\huaB(a)^{-1}\cdot_G a^{-1})u)+\Theta(\huaB(b)^{-1}\cdot_G b^{-1})v-\Theta(\huaB(b)^{-1})T(\Theta(\huaB(a)^{-1}\cdot_G a^{-1})u)\Big) \Big).
\end{align*}
By \cite[Proposition 3.5]{JLS}, $\huaB(b)^{-1}=\huaB(\huaB(b)^{-1}\cdot_G b^{-1}\cdot_G\huaB(b))$, let $w=\Theta(\huaB(a)^{-1}\cdot_G a^{-1})u$. Then to prove $\mathrm{B}$ is a Rota-Baxter operator if and only if 
\begin{align}\label{eqrepth}
&\quad\Theta(\huaB\Big(\huaB(b)^{-1}\cdot_G b^{-1}\cdot_G\huaB(b))\Big)T(w)\\
\nonumber&=T\Big(\Theta(\huaB(b)^{-1}\cdot_G b^{-1})w+\Theta(\huaB(b)^{-1}\cdot_G b^{-1})T(w)-\Theta(\huaB(b)^{-1})T(w)\Big).
\end{align}
By \cite[Proposition 3.5]{JLS}, for any $g\in G$, there exists a unique $h\in G$ such that 
\begin{equation*}
	g=\huaB(h)^{-1}\cdot_G h^{-1}\cdot_G\huaB(h).
\end{equation*}
Thus \eqref{eqrepth} holds if and only if for any $g\in G, r\in V$ such that
\begin{equation*}
\Theta(\huaB(g))T(r)=T\Big(\Theta(g\cdot_G\huaB(g))r+\Theta(g\cdot_G\huaB(g))T(r)-\Theta(\huaB(g))T(r)\Big).
\end{equation*}
We finish the proof.
\end{proof}

Let $(G, \huaB)$ be a Rota-Baxter Lie group, with Lie algebra $\g=T_{e_G}G$ and induced infinitesimal Rota-Baxter operator $B:=\huaB_{*e_G}: \g \to \g$. Let $\widetilde{\Ad} : G \to \GL(\g)$ denote the adjoint representation of the Lie group $(G, \cdot_G, e_G)$ on its Lie algebra $(\g, [\cdot,\cdot]_\g)$. As we now show, this classical construction provides a fundamental example of a representation in the sense of Definition \ref{defirep}.

\begin{pro}\label{repad}
	For a Rota-Baxter Lie group $(G, \huaB)$, $(\g, B, \widetilde{\Ad})$ is a representation of $(G, B)$ on $(\g, B)$.
\end{pro}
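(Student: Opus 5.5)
The plan is to reduce \eqref{rep} for $\Theta=\widetilde{\Ad}$ and $T=B$ to the statement that $\huaB$ intertwines two conjugation actions, and then differentiate. We must show, for all $a\in G$ and $u\in\g$,
\begin{equation*}
\widetilde{\Ad}_{\huaB(a)}B(u)=B\Big(\widetilde{\Ad}_{a\cdot_G\huaB(a)}u+\widetilde{\Ad}_{a\cdot_G\huaB(a)}B(u)-\widetilde{\Ad}_{\huaB(a)}B(u)\Big).
\end{equation*}
That $\widetilde{\Ad}$ is a Lie group homomorphism $G\to\GL(\g)$ is classical, so only this identity needs proof. I will obtain it by recognizing the argument of $B$ on the right-hand side as the adjoint action of the descendant Lie group.

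First I record the elementary consequences of \eqref{defiB}. Putting $g=h=e_G$ gives $\huaB(e_G)^2=\huaB(e_G)$, hence $\huaB(e_G)=e_G$. Next, define
\begin{equation*}
g\circ h:=g\cdot_G\huaB(g)\cdot_G h\cdot_G\huaB(g)^{-1}.
\end{equation*}
Then \eqref{defiB} reads $\huaB(g\circ h)=\huaB(g)\cdot_G\huaB(h)$. From this one checks directly that $(G,\circ)$ is a Lie group with unit $e_G$ and inverse
\begin{equation*}
g^{\circ -1}=\huaB(g)^{-1}\cdot_G g^{-1}\cdot_G\huaB(g),
\end{equation*}
which is the descendant group used implicitly via \cite[Proposition 3.5]{JLS}. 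It also follows that $\huaB:(G,\circ)\to(G,\cdot_G)$ is a homomorphism. Consequently
\begin{equation*}
\huaB(a\circ h\circ a^{\circ-1})=\huaB(a)\cdot_G\huaB(h)\cdot_G\huaB(a)^{-1}.
\end{equation*}

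Now put $h=\exp(tu)$ and differentiate at $t=0$. Since $\huaB(e_G)=e_G$ and $\huaB_{*e_G}=B$, the right side gives $\widetilde{\Ad}_{\huaB(a)}B(u)$. The left side gives $B(\Ad^\circ_a u)$, where $\Ad^\circ_a$ is the differential at $e_G$ of $h\mapsto a\circ h\circ a^{\circ-1}$.

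The main (but computational) step is to compute $\Ad^\circ_a u$ explicitly. Using $\huaB(a\circ h)=\huaB(a)\huaB(h)$, one gets
\begin{equation*}
a\circ h\circ a^{\circ-1}=a\,\huaB(a)\,h\,\huaB(h)\,\huaB(a)^{-1}a^{-1}\huaB(a)\,\huaB(h)^{-1}\huaB(a)^{-1}.
\end{equation*}
Differentiating by the product rule, only two factors depend on $t$.
\begin{itemize}
\item The factor $h\,\huaB(h)$ has derivative $u+B(u)$ at $t=0$. It sits between $a\huaB(a)$ and its inverse, so it contributes $\widetilde{\Ad}_{a\cdot_G\huaB(a)}(u+B(u))$.
\item The factor $\huaB(h)^{-1}$ has derivative $-B(u)$. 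The prefix before it, evaluated at $h=e_G$, is $a\huaB(a)\huaB(a)^{-1}a^{-1}\huaB(a)=\huaB(a)$, and the suffix is $\huaB(a)^{-1}$. So it contributes $-\widetilde{\Ad}_{\huaB(a)}B(u)$.
\end{itemize}
Hence
\begin{equation*}
\Ad^\circ_a u=\widetilde{\Ad}_{a\cdot_G\huaB(a)}u+\widetilde{\Ad}_{a\cdot_G\huaB(a)}B(u)-\widetilde{\Ad}_{\huaB(a)}B(u),
\end{equation*}
and substituting into $B(\Ad^\circ_a u)=\widetilde{\Ad}_{\huaB(a)}B(u)$ gives exactly \eqref{rep}.

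The only care needed is the bookkeeping of which group elements appear as prefix and suffix when differentiating the long product.
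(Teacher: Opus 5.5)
Your proof is correct and is essentially the paper's argument. Both rewrite $\huaB(a)\huaB(h)\huaB(a)^{-1}$ as $\huaB$ applied to $a\huaB(a)\,h\huaB(h)\,\huaB(a)^{-1}a^{-1}\cdot\huaB(a)\huaB(h)^{-1}\huaB(a)^{-1}$, set $h=\exp(tu)$ and differentiate at $t=0$; the paper gets this rewriting from the cited inverse formula $\huaB(a)^{-1}=\huaB(\Ad_{\huaB(a)^{-1}}a^{-1})$, while you derive it yourself via conjugation $a\circ h\circ a^{\circ-1}$ in the descendant group.
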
 
\begin{proof}
According to \cite[Proposition 3.15]{JSZ}, $\huaB(a)^{-1}=\huaB(\Ad_{\huaB(a)^{-1}}a^{-1})$ for any $a\in G$. Denote the exponential map 
of the Lie group $G$ by $\exp$. By the fact that $\huaB$ is a Rota-Baxter operator, for any $a\in G, u\in V$, we have
\begin{align*}
\widetilde{\Ad}_{\huaB(a)}(B(u))	&=\frac{d}{dt}|_{t=0}\Ad_{\huaB(a)}\huaB(\exp(tu))\\
	&=\frac{d}{dt}|_{t=0}\huaB\Big(\Ad_{a\cdot_G\huaB(a)}(\exp(tu)\cdot_G\huaB(\exp(tu)))\cdot_G\Ad_{\huaB(a)}\huaB(\exp(tu))^{-1}\Big)\\
	&=B\Big(\frac{d}{dt}|_{t=0}\Ad_{a\cdot_G\huaB(a)}(\exp(tu)\cdot_G\huaB(\exp(tu)))+\frac{d}{dt}|_{t=0}\Ad_{\huaB(a)}\huaB(\exp(tu))^{-1}\Big)\\
	&=B(\widetilde{\Ad}_{a\cdot_G\huaB(a)}(u+B(u))-\widetilde{\Ad}_{\huaB(a)}B(u)),
	\end{align*}
	which means that $(\g, B, \widetilde{\Ad})$ is a representation of $(G, \huaB)$ on $(\g, B)$.
\end{proof}
\begin{defi}{\rm(\cite{DAS})}
	Let $V$ be a finite dimensional vector space and $T:V\to V$ a linear map. A {\bf representation} of $(\g, B)$ on $(V, T)$ is a Lie algebra representation $\theta: \g\to \gl(V)$ satisfying
	\begin{equation}\label{defth}
		\theta(B(x))T(u)=T(\theta(B(x))u+\theta(x)T(u)+\theta(x)u)\quad \forall x\in\g, ~u\in V.
	\end{equation}
	We denote a representation by $(V, T, \theta)$.
\end{defi}

Next, we consider the relationship between representations of Rota-Baxter Lie groups and representations of Rota-Baxter Lie algebras.

\begin{pro}\label{diffr}
	Let $(V, T, \Theta)$ be a representation of a Rota-Baxter Lie group $(G, \huaB)$. Then $(V, T, \theta)$ is a representation of the Rota-Baxter Lie algebra $(\g, \huaB_{*e_G})$, where $\g=T_{e_G}G$ and $\theta=\Theta_{*e_G}$.
\end{pro}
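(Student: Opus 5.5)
We differentiate the group-level identity \eqref{rep} along a one-parameter subgroup. Since $\Theta$ is a Lie group homomorphism, $\theta=\Theta_{*e_G}$ is a Lie algebra representation of $\g$ on $V$. It therefore remains to verify \eqref{defth} with $B=\huaB_{*e_G}$. Fix $x\in\g$ and $u\in V$, substitute $a=\exp(tx)$ into \eqref{rep}, and apply $\frac{d}{dt}|_{t=0}$ to both sides. Both sides are smooth in $t$, since $\huaB$ and $\Theta$ are smooth and $T$ is linear.

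First we record the basic facts. Setting $g=h=e_G$ in \eqref{defiB} gives $\huaB(e_G)^2=\huaB(e_G)$, so $\huaB(e_G)=e_G$. Hence $t\mapsto\huaB(\exp(tx))$ is a smooth curve through $e_G$ with velocity $B(x)$. The curve $t\mapsto \exp(tx)\cdot_G\huaB(\exp(tx))$ passes through $e_G$ with velocity $x+B(x)$, by the product rule for curves in a Lie group. Composing with $\Theta$ and differentiating at $e_G$ gives
\begin{equation*}
\frac{d}{dt}\Big|_{t=0}\Theta(\huaB(\exp(tx)))=\theta(B(x)),\qquad \frac{d}{dt}\Big|_{t=0}\Theta(\exp(tx)\cdot_G\huaB(\exp(tx)))=\theta(x)+\theta(B(x)).
\end{equation*}

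Differentiating the left side of \eqref{rep} then gives $\theta(B(x))T(u)$. On the right side, $T$ is linear and can be taken outside the derivative, which yields
\begin{equation*}
T\Big((\theta(x)+\theta(B(x)))u+(\theta(x)+\theta(B(x)))T(u)-\theta(B(x))T(u)\Big)=T\Big(\theta(B(x))u+\theta(x)T(u)+\theta(x)u\Big).
\end{equation*}
This is exactly \eqref{defth}, and the proof would be complete.

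There is no real obstacle here. The only points needing care are that $\huaB(e_G)=e_G$, so that the curves pass through the identity, and that the derivative of $a\cdot_G\huaB(a)$ is the sum $x+B(x)$. If we preferred, we could instead differentiate the Rota-Baxter operator $\mathrm{B}$ on the semidirect product $G\ltimes_\Theta V$ from Proposition \ref{semirb}, and read off \eqref{defth} from the weight-$1$ condition on $\g\ltimes_\theta V$. The direct computation above is simpler.
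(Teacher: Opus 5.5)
Your proposal is correct and takes the same route as the paper: substitute $a=\exp(tx)$ into \eqref{rep}, differentiate at $t=0$, and pull the linear map $T$ outside the derivative. You also state explicitly two facts the paper uses without comment, namely $\huaB(e_G)=e_G$ and that $t\mapsto\exp(tx)\cdot_G\huaB(\exp(tx))$ has velocity $x+B(x)$ at $t=0$.
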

\begin{proof}
	Denote the exponential map of the Lie group $G$ by $\exp$. Then we have
	\begin{align*}
		\theta(B(x))T(u)&=\frac{d}{dt}|_{t=0}\Theta(\huaB(\exp(tx)))T(u)\\
		&=T\Big(\Theta(\exp(tx)\cdot_G\huaB(\exp(tx)))u+\Theta(\exp(tx)\cdot_G\huaB(\exp(tx)))T(u)-\Theta(\huaB(\exp(tx)))T(u)\Big)\\
		&=T(\theta(B(x))u+\theta(x)T(u)+\theta(x)u),
	\end{align*}
	for any $x\in\g, u\in V$. Thus $(V, T, \theta)$ is a representation of the Rota-Baxter Lie algebra $(\g, \huaB_{*e_G})$.
\end{proof}

Let $(V, T, \theta)$ be a representation of a Rota-Baxter Lie algebra $(\g, B)$, and let $(G, \cdot_G, e_G)$ denote the connected and simply connected Lie group corresponding to $(\g, [\cdot, \cdot]_\g)$. Suppose that there exists a Rota-Baxter operator $\huaB$ on $G$ satisfying $\huaB_{*e_G}=B$. We shall show that, under this assumption, the representation $(V, T, \theta)$ can be integrates to a representation of $(G, \huaB)$.
\begin{pro}\label{intrep}
	With the above notations, let $\Theta:G\to \GL(V)$ be the representation of $G$ corresponding to $\theta:\g\to\gl(V)$. Then $(V, T, \Theta)$ is a representation of $(G, \huaB)$.
\end{pro}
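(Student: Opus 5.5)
The plan is to recast the defining identity \eqref{rep} as the statement that a certain representation of the descendant Lie group preserves a fixed linear subspace. Invariance of a subspace under a connected Lie group can then be checked at the Lie algebra level, where it will reduce exactly to \eqref{defth}.

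\textbf{Step 1: the descendant group and two homomorphisms.} Let $G_\huaB$ be the descendant Lie group: the manifold $G$ with product $a\ast b=a\cdot_G\Ad_{\huaB(a)}b$ (a Lie group by \cite{GLS}). It is connected because $G$ is. Its Lie algebra is $\g$ with the descendant bracket
\begin{equation*}
[x,y]_B=[B(x),y]_\g+[x,B(y)]_\g+[x,y]_\g .
\end{equation*}
\begin{itemize}
\item By \eqref{defiB}, $\huaB:G_\huaB\to G$ is a Lie group homomorphism.
\item The map $a\mapsto a\cdot_G\huaB(a)$ is also a homomorphism $G_\huaB\to G$. Indeed,
\begin{equation*}
(a\ast b)\cdot_G\huaB(a\ast b)=a\cdot_G\huaB(a)\cdot_G b\cdot_G\huaB(a)^{-1}\cdot_G\huaB(a)\cdot_G\huaB(b)=a\cdot_G\huaB(a)\cdot_G b\cdot_G\huaB(b).
\end{equation*}
\end{itemize}
Composing with $\Theta$ gives two representations of $G_\huaB$ on $V$:
\begin{equation*}
\rho_1(a)=\Theta(\huaB(a)),\qquad \rho_2(a)=\Theta(a\cdot_G\huaB(a)).
\end{equation*}
Their differentials at $e_G$ are $\theta\circ B$ and $\theta\circ(\Id+B)$, respectively.

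\textbf{Step 2: rewrite \eqref{rep} as subspace invariance.} On $W=V\oplus V$ put the representation $\rho(a)=\rho_1(a)\oplus\rho_2(a)$ of $G_\huaB$, and consider the subspace
\begin{equation*}
K=\{(x,y)\in V\oplus V:\ x=T(y-x)\}.
\end{equation*}
For $u\in V$ we have $(T(u),u+T(u))\in K$, since $y-x=u$. Every element of $K$ is of this form, with $u=y-x$. Now
\begin{equation*}
\rho(a)(T(u),u+T(u))=\big(\rho_1(a)T(u),\ \rho_2(a)(u+T(u))\big).
\end{equation*}
This vector lies in $K$ if and only if
\begin{equation*}
\rho_1(a)T(u)=T\big(\rho_2(a)u+\rho_2(a)T(u)-\rho_1(a)T(u)\big),
\end{equation*}
which is literally \eqref{rep}. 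Hence $(V,T,\Theta)$ is a representation of $(G,\huaB)$ if and only if $\rho(a)K\subseteq K$ for all $a\in G_\huaB$.

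\textbf{Step 3: reduce to the Lie algebra.} Since $G_\huaB$ is connected, it is generated by $\exp_{G_\huaB}$ of a neighbourhood of $0$. Also, $\rho(\exp X)=e^{d\rho(X)}$. So $K$ is $\rho$-invariant as soon as it is $d\rho(x)$-invariant for every $x\in\g$, where
\begin{equation*}
d\rho(x)=\theta(B(x))\oplus\theta(x+B(x)).
\end{equation*}
Repeating the computation of Step 2 infinitesimally, $d\rho(x)(T(u),u+T(u))\in K$ if and only if
\begin{equation*}
\theta(B(x))T(u)=T\big(\theta(x+B(x))(u+T(u))-\theta(B(x))T(u)\big).
\end{equation*}
After cancelling the term $T\theta(B(x))T(u)$ on the right, this is exactly \eqref{defth}, which holds by hypothesis.

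\textbf{Main obstacle.} The key point is the choice of $K$ and of the pair of homomorphisms in Step 2. Once \eqref{rep} is seen as subspace invariance for a genuine group representation, the rest is standard. Two ingredients remain to be justified:
\begin{itemize}
\item the Lie group structure of $G_\huaB$ and the formula for its Lie algebra, which I take from \cite{GLS};
\item the fact that $\Theta$ is a representation of $G$ at all, which is exactly where simple connectedness of $G$ is used (to integrate $\theta$).
\end{itemize}
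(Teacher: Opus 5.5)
Your proof is correct and is essentially the paper's argument: your $K$ is the paper's subspace $W=\{((\Id+T)u,T(u))\}$ with the two factors swapped, and your representation $\rho=\rho_1\oplus\rho_2$ of $G_\huaB$ is the paper's $\Lambda|_H$ transported along the isomorphism $G_\huaB\cong H=\{(a\cdot_G\huaB(a),\huaB(a))\}$, $a\mapsto(a\cdot_G\huaB(a),\huaB(a))$. The only difference is the last step: you use just connectedness of $G_\huaB$ and $\rho(\exp X)=e^{d\rho(X)}$, whereas the paper invokes simple connectedness of $H$ and integrates the $\h$-representation on $W$; your version is slightly cleaner and needs less.
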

\begin{proof}
Since $\huaB$ is a Rota-Baxter operator on $G$, the set 	
\begin{equation*}
H=\{(a\cdot_G\huaB(a), \huaB(a))|\forall a\in G\}\subset G\times G
\end{equation*}
 is a Lie subgroup of the direct product Lie group $G\times G$, with Lie algebra 
 \begin{equation*}
 \h=\{(\Id+B)(x), B(x)|\forall x\in \g\}.
 \end{equation*}
  Moreover, the map $\kappa:H\to G$ defined by $\kappa(a\cdot_G\huaB(a), \huaB(a))=a$ is a homeomorphism, consequently $H$ is a connected and simply connected Lie group. 

Define the vector space $W=\{((\Id+T)u, T(u))|\forall u\in V\}$. Consider the representation of the direct product Lie algebra $\g\oplus\g$ on $V\times V$, given by 
\begin{equation*}
\lambda:\g\oplus\g\to\gl(V\times V), \quad	\lambda(x, y)(u, v)=(\theta(x)u, \theta(y)v), \quad x, y\in\g, ~\forall u, v\in V,
\end{equation*}
and let 
\begin{equation*}
\Lambda: G\times G\to \GL(V\times V), \quad \Lambda(a, b)(u, v)=(\Theta(a)u, \Theta(b)v), \quad \forall a, b\in G,~u, v\in V.
\end{equation*}
be the corresponding representation of $G\times G$ on $V\times V$. By \eqref{defth}, the restriction $\lambda|_{\h}$ preserves $W$, and hence induces a representation of $\h$ on $W$. Since $H$ is connected and simply connected, this representation of $\h$ integrates to a representation of $H$ on $W$, which is precisely $\Lambda|_H$. It follows that
\begin{equation*}
\Lambda|_{H}(a\cdot_G\huaB(a), \huaB(a))((\Id+T)u, T(u))=(\Theta(a\cdot_G\huaB(a))(\Id+T)u, \Theta(\huaB(a))T(u))\in W,
\end{equation*}
which gives
\begin{equation*}
\Theta(\huaB(a))T(u)=T\Big(\Theta(a\cdot_G\huaB(a))u+\Theta(a\cdot_G\huaB(a))T(u)-\Theta(\huaB(a))T(u)\Big).
\end{equation*} 
Thus $(V, T, \Theta)$ is a representation of $(G, \huaB)$.
\end{proof}

\section{Cohomology of Rota-Baxter Lie groups}\label{seccoh}
In this section, we introduce a cohomology theory for a Rota-Baxter Lie group. The relation between cohomology groups of Rota-Baxter operators and cohomology groups of Rota-Baxter Lie groups is given by a long exact sequence.

First we  recall the normalized cohomology of a Lie group $G$ with coefficients in a representation
$\Theta:G\to \GL(V)$ (see e.g. \cite{EM}\cite{Wei}). A smooth map $\alpha_n: \underbrace{G\times\cdots\times G}_n\lon V$ is called an $n$-normalized cochain in $\alpha_n(g_1, \cdots, g_n)=0$ when any one of elements $g_i=e_G$. Denote the space of $n$-normalized cochains by $C^{k}(G, V)$. The space $C^{n}(G, V)$ is an abelian group. The coboundary operator $\dM^\Theta: C^{n}(G, V)\rightarrow C^{n+1}(G, V)$ is defined by
\begin{eqnarray*}
	\dM^\Theta(\alpha_n)(g_1,\cdots,g_n,g_{n+1})&=&\Theta(g_1)\alpha_n(g_2, \cdots, g_n, g_{n+1})\\
	&&+\sum_{i=1}^{n}(-1)^{i}\alpha_n(g_1,\cdots,g_{i-1}, g_ig_{i+1}, g_{i+2}, \cdots g_{n+1})\\
	&&+(-1)^{n+1}\alpha_{n}(g_1,\cdots,g_n).
\end{eqnarray*}
The corresponding $n$-th cohomology group is denoted by $H^{n}(G, V)$.
\begin{rmk}
It was proved in \cite{EM} that the normalized cohomology of a Lie group G with coefficients in a representation $\Theta: G\to\GL(V)$ is isomorphic to the usual cohomology.
\end{rmk}

Let $(G, \huaB)$ be a Rota-Baxter Lie group. Then $G$ has a new Lie group structure $\star: G\times G\to G$, defined by
\begin{equation*}
	a\star b=a\cdot_G\huaB(a)\cdot_G b\cdot_G\huaB(a)^{-1}, \quad \forall a, b\in G.
\end{equation*}
Let $(\g, [\cdot, \cdot]_\g)$ be the Lie algebra of $(G, \cdot_G, e_G)$ and $B=\huaB_{*e_G}$. Then the Lie algebra of $(G, \star, e_G)$ is $(\g, [\cdot, \cdot]_{B})$, where
\begin{equation*}
	[x, y]_B=[B(x), y]_\g+[x, B(y)]_\g+[x, y]_\g, \quad\forall x, y\in\g.
\end{equation*}
Denote by $G_\huaB$ the Lie group $(G, \star, e_G)$ and $\g_B$ the Lie algebra $(\g, [\cdot, \cdot]_B)$ respectively, they play important roles in the following. 
\begin{pro}\label{defirB}
Let $(V, T, \Theta)$ be a representation of $(G, \huaB)$. Define a smooth map $\Theta_{\huaB}: G_\huaB\to \GL(V)$ by 
\begin{equation}\label{repB}
\Theta_{\huaB}(a)u=(\Id_V+T)\Theta(\huaB(a))u-T\Big(\Theta(a\cdot_G\huaB(a))u\Big),\quad \forall a\in G, u\in V.
\end{equation}
Then $\Theta_{\huaB}$ is a representation of $G_\huaB$ on $V$.
\end{pro}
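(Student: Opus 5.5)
We must show that $\Theta_\huaB(a\star b)=\Theta_\huaB(a)\Theta_\huaB(b)$ and $\Theta_\huaB(e_G)=\Id_V$. Smoothness is clear from the formula. For the unit, $\huaB(e_G)=e_G$ follows from \eqref{defiB} with $g=h=e_G$, and then \eqref{repB} gives $\Theta_\huaB(e_G)=(\Id_V+T)-T=\Id_V$. So the real content is multiplicativity. Invertibility is then automatic, since $a$ has a $\star$-inverse and we get $\Theta_\huaB(a)\Theta_\huaB(a^{\star -1})=\Id_V$.

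Rather than expand products directly, I would use the structure from the proof of Proposition \ref{intrep}. There the subgroup $H=\{(a\cdot_G\huaB(a),\huaB(a))\}\subset G\times G$ appears. I would observe that the map
\begin{equation*}
\phi:G_\huaB\to H,\qquad \phi(a)=(a\cdot_G\huaB(a),\huaB(a)),
\end{equation*}
is a group isomorphism. Indeed,
\begin{equation*}
(a\star b)\cdot_G\huaB(a\star b)=a\huaB(a)b\huaB(a)^{-1}\huaB(a)\huaB(b)=a\huaB(a)\,b\huaB(b),
\end{equation*}
using $\huaB(a\star b)=\huaB(a)\huaB(b)$, which is exactly \eqref{defiB}. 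The second component is multiplicative by the same identity.

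Next I would introduce $\Lambda:G\times G\to\GL(V\times V)$, $\Lambda(x,y)=(\Theta(x),\Theta(y))$, and write $P=\Lambda(a\cdot_G\huaB(a),\huaB(a))$. The defining identity \eqref{rep} says precisely that $P$ sends $((\Id+T)u,Tu)$ into $W$, i.e., that $P$ preserves $W=\{((\Id+T)u,Tu)\}$; this is the same observation used in Proposition \ref{intrep}. Hence $H$ acts on $W$ via $\Lambda|_H$.

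The key step is to identify $\Theta_\huaB$ with a representation built from this action on a suitable space. I would consider the map $\pi:V\times V\to V$, $\pi(u,v)=u-v$. On $W$ it gives $\pi((\Id+T)u,Tu)=u$, so $\pi|_W:W\to V$ is an isomorphism with inverse $u\mapsto((\Id+T)u,Tu)$. Transporting, the action of $\phi(a)$ on $V$ is
\begin{equation*}
u\mapsto \Theta(a\huaB(a))(\Id+T)u-\Theta(\huaB(a))Tu.
\end{equation*}
This is a representation of $G_\huaB$, being the pullback via $\phi$ of a group action. However, it is not literally \eqref{repB}; applying \eqref{rep} rewrites $\Theta(\huaB(a))Tu$, and the two formulas do not obviously agree. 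This mismatch is the expected obstacle. I would therefore instead use the complementary choice: $H$ also acts on the quotient $(V\times V)/W$, and I would identify this quotient with $V$ via $(u,v)\mapsto v-T(u-v)$, whose kernel is exactly $W$. Computing the induced action on the class of $(0,v)$, which corresponds to $(\Id+T)v$, should produce $(\Id+T)\Theta(\huaB(a))v-T\Theta(a\huaB(a))v$ after a reparametrization. I expect the bookkeeping of this identification to be the main thing to check, possibly requiring a different splitting such as $(u,v)\mapsto (\Id+T)v-Tu$, which also kills $W$ and sends $(0,v)$ to $(\Id+T)v$ and $(u,0)$ to $-Tu$. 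With this splitting the induced map is literally \eqref{repB}, evaluated on representatives $(0,v)$ or $(u,0)$ via linearity. The quotient action is well defined because $W$ is $H$-invariant, and it is a homomorphism because $\phi$ is.

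If the quotient argument has a gap, I would fall back on directly expanding $\Theta_\huaB(a)\Theta_\huaB(b)$. I would use $\Theta(a\star b)$-type identities together with \eqref{rep}, applied to the $T\Theta(\cdot)T$ terms, to cancel the cross terms.
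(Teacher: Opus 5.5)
Your quotient argument is correct, but it takes a different route from the paper. The paper's proof is essentially your fallback: it expands $\Theta_\huaB(a)\Theta_\huaB(b)u$ into four terms. It then uses \eqref{rep} in the form $(\Id_V+T)\Theta(\huaB(a))T=T\Theta(a\cdot_G\huaB(a))(\Id_V+T)$ to cancel the cross terms, leaving $(\Id_V+T)\Theta(\huaB(a)\cdot_G\huaB(b))u-T\Theta(a\cdot_G\huaB(a)\cdot_G b\cdot_G\huaB(b))u=\Theta_\huaB(a\star b)u$.

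You instead proceed as follows:
\begin{enumerate}
\item Treat $\Lambda\circ\phi$ as a representation of $G_\huaB$ on $V\oplus V$, and read \eqref{rep} as invariance of $W$.
\item Pass to the quotient via $q(u,v)=(\Id_V+T)v-Tu$, whose kernel is exactly $W$ and which satisfies $q(x,x)=x$.
\item Then $\Theta_\huaB(a)x=q(\Theta(a\cdot_G\huaB(a))x,\Theta(\huaB(a))x)$ and $q\circ\Lambda(\phi(a))=\Theta_\huaB(a)\circ q$. This gives multiplicativity, $\Theta_\huaB(e_G)=\Id_V$ and invertibility at once.
\end{enumerate}
The paper's computation is shorter and needs no auxiliary objects. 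Yours reuses $H$, $W$ and $\Lambda$ from the proof of Proposition \ref{intrep}, and it explains where \eqref{repB} comes from: $\Theta_\huaB$ is the quotient in a short exact sequence of $G_\huaB$-modules $0\to W\to V\oplus V\to V\to 0$.

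When you write it up, tidy three things:
\begin{enumerate}
\item Your two ``splittings'' $v-T(u-v)$ and $(\Id_V+T)v-Tu$ are the same map.
\item The representative of $x$ to use is $(x,x)$. The class of $(0,v)$ alone only gives $\Theta_\huaB(a)(\Id_V+T)v=(\Id_V+T)\Theta(\huaB(a))v$.
\item The subrepresentation on $W$ really is a different representation from $\Theta_\huaB$, not just ``not obviously'' equal. For $T=0$ it is $\Theta(a\cdot_G\huaB(a))$ rather than $\Theta(\huaB(a))$, so drop that branch instead of trying to reconcile it with \eqref{repB}.
\end{enumerate}
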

\begin{proof}
For any $a, b\in G, u\in V$, since 
	\begin{align*}
	\Theta_{\huaB}(a)\Theta_{\huaB}(b)u&=(\Id_V+T)\Theta(\huaB(a))(\Id_V+T)\Theta(\huaB(b))u-(\Id_V+T)\Theta(\huaB(a))T(\Theta(b\cdot_G\huaB(b))u)\\
	&\quad-T\Big(\Theta(a\cdot_G\huaB(a))(\Id_V+T)\Theta(\huaB(b))u\Big)+T\Big(\Theta(a\cdot_G\huaB(a))T(\Theta(b\cdot_G\huaB(b)))u\Big)\\
	&=(\Id_V+T)\Theta(\huaB(a)\huaB(b))u-T(\Theta(a\cdot_G\huaB(a))\Theta(b\cdot_G\huaB(b))u)\\
	&=\Theta_\huaB(a\star b)u,
	\end{align*} 
	then $\Theta_{\huaB}$ is a representation of $G_\huaB$ on $V$.
\end{proof}
\begin{rmk}
	In particular, for the representation $(\g, \huaB_{*e_G}, \widetilde{\Ad})$ of $(G, \huaB)$ from Proposition \ref{repad}, the map $\Theta_\huaB$ recovers the map introduced in \cite{JSZ} for studying the cohomology of Rota-Baxter operators on Lie groups with coefficient in the representation $(\g, \huaB_{*e_G}, \widetilde{\Ad})$.
\end{rmk}

Now we define a cohomology theory for Rota-Baxter operators on Lie groups. Let $(V, T, \Theta)$ be a representation of a Rota-Baxter Lie group $(G, \huaB)$. Deﬁne the space of 
1-normalized cochains $C^{1}(\huaB, T)$ to be $0$. For $n \geq 2$, deﬁne the space of $n$-normalized 
cochains $C^{n}(\huaB, T)$ to be $C^{n-1}(G, V)$.

\begin{defi}
	The cohomology of the cochain complex $(\oplus_{n=1}^{\infty}C^{n}(\huaB, T), \dM^{\huaB})$ is called the 
	cohomology of the Rota-Baxter operator $\huaB$ with coefficients in the representation $(V, T, \Theta)$, 
	where $\dM^\huaB$ is the coboundary operator for the Lie group $G_\huaB$ with coefficients in the representation
	$(V, \Theta_\huaB)$. The corresponding $n$-th cohomology group is denoted by $H^{n}(\huaB)$.
\end{defi}

Let $(V, T, \Theta)$ be a representation of a Rota-Baxter Lie group $(G, \huaB)$. For $n\geq 1$, define a map $c_n: C^{n}(G, V)\to C^{n+1}(\huaB, T)$ by
\begin{equation}\label{dec}
	c_n(f)(a_1, \cdots, a_n)=(-1)^{n}\Big((\Id_V+T)f(\huaB(a_1), \cdots, \huaB(a_n))-T(f(a_1\cdot_G\huaB(a_1), \cdots, a_n\cdot_G\huaB(a_n)))\Big),
\end{equation}
where $a_i\in G$. Assembling these maps, we obtain a map $c:\oplus_{i=1}^{+\infty}C^{i}(G, V)\to \oplus_{i=1}^{+\infty}C^{i+1}(\huaB, T)$, given by $c|_{C^{n}(G, V)}=c_n$.
\begin{pro}\label{chto}
The map $c:\oplus_{i=1}^{\infty}C^{i}(G, V)\to \oplus_{i=1}^{+\infty}C^{i+1}(\huaB, T)$ is a cochain map, i.e. we have $c\circ \dM^{\Theta}+\dM^{\huaB}\circ c=0$.
\end{pro}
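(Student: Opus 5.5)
The plan is to reduce the statement to two facts: $\huaB$ and the map $a\mapsto a\cdot_G\huaB(a)$ are both Lie group homomorphisms $G_\huaB\to G$, and the defining identity \eqref{rep} intertwines the two actions. With these, the pullback terms commute with the group differentials, and the only non-trivial comparison is the term involving the module action.

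\emph{Step 1: two homomorphisms.} Write $\varphi(a)=a\cdot_G\huaB(a)$. By \eqref{defiB}, $\huaB(a\star b)=\huaB(a\cdot_G\Ad_{\huaB(a)}b)=\huaB(a)\cdot_G\huaB(b)$, so $\huaB:G_\huaB\to G$ is a homomorphism. From this,
\begin{equation*}
\varphi(a\star b)=a\cdot_G\huaB(a)\cdot_G b\cdot_G\huaB(a)^{-1}\cdot_G\huaB(a)\cdot_G\huaB(b)=\varphi(a)\cdot_G\varphi(b),
\end{equation*}
so $\varphi$ is a homomorphism as well. Moreover $\huaB(e_G)=e_G$, hence $\varphi(e_G)=e_G$, so $c_n$ sends normalized cochains to normalized cochains. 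Set $X=f(\huaB(a_2),\cdots,\huaB(a_{n+1}))$ and $Y=f(\varphi(a_2),\cdots,\varphi(a_{n+1}))$.

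\emph{Step 2: terms without the module action.} Expand both $\dM^{\huaB}(c_nf)(a_1,\cdots,a_{n+1})$ and $c_{n+1}(\dM^\Theta f)(a_1,\cdots,a_{n+1})$. In $\dM^\huaB$ the inner terms involve $a_i\star a_{i+1}$. By Step 1, inside $f$ these become $\huaB(a_i)\cdot_G\huaB(a_{i+1})$ and $\varphi(a_i)\cdot_G\varphi(a_{i+1})$ respectively. These are exactly the inner terms of $\dM^\Theta f$ evaluated on $(\huaB(a_j))$ and $(\varphi(a_j))$, with $(\Id_V+T)$ and $T$ applied as in \eqref{dec}. The same holds for the last term. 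Because $c_n$ carries the sign $(-1)^n$ and $c_{n+1}$ carries $(-1)^{n+1}$, all these terms cancel in the sum $c\circ\dM^\Theta+\dM^\huaB\circ c$.

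\emph{Step 3: the term with the module action (the main obstacle).} It remains to show
\begin{equation*}
\Theta_\huaB(a_1)\Big((\Id_V+T)X-TY\Big)=(\Id_V+T)\Theta(\huaB(a_1))X-T\Theta(\varphi(a_1))Y.
\end{equation*}
Rewrite \eqref{rep} as
\begin{equation*}
(\Id_V+T)\Theta(\huaB(a))T(u)=T\Theta(\varphi(a))(\Id_V+T)u,
\end{equation*}
which is the identity already used in the proof of Proposition \ref{defirB}. Expanding the left side with \eqref{repB} gives four terms:
\begin{enumerate}
\item $(\Id_V+T)\Theta(\huaB(a_1))(\Id_V+T)X$;
\item $-T\Theta(\varphi(a_1))(\Id_V+T)X$;
\item $-(\Id_V+T)\Theta(\huaB(a_1))TY$;
\item $+T\Theta(\varphi(a_1))TY$.
\end{enumerate}
Split term (1) as $(\Id_V+T)\Theta(\huaB(a_1))X+(\Id_V+T)\Theta(\huaB(a_1))TX$. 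By the rewritten identity with $u=X$, the second piece cancels term (2). By the same identity with $u=Y$, term (3) equals $-T\Theta(\varphi(a_1))(\Id_V+T)Y$, and adding term (4) leaves $-T\Theta(\varphi(a_1))Y$.

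What survives is exactly the right side, so the first terms also appear with opposite signs in the two compositions. Together with Step 2, this gives $c\circ\dM^\Theta+\dM^\huaB\circ c=0$. The only real care needed is sign bookkeeping between $(-1)^n$ and $(-1)^{n+1}$ and the index shift $C^{n+1}(\huaB,T)=C^n(G,V)$.
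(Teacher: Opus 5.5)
Your proof is correct and follows essentially the same route as the paper. Both arguments expand the two compositions and cancel the inner and last terms because $\huaB$ and $a\mapsto a\cdot_G\huaB(a)$ carry $\star$ to $\cdot_G$; the remaining action terms then vanish by two applications of \eqref{rep} in the form $(\Id_V+T)\Theta(\huaB(a))T(u)=T\Theta(a\cdot_G\huaB(a))(\Id_V+T)u$, with $u=X$ and $u=Y$. You also make explicit two points the paper uses tacitly: these two maps are homomorphisms $G_\huaB\to G$, and $c_n$ preserves normalized cochains.
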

\begin{proof}
For any $a_i\in G$ and $f\in C^{n}(G, V)$, by \eqref{repB} and \eqref{dec}, we have
\begin{align*}
&\quad\Big(c_{k+1}(\dM^{\Theta}f)+\dM^{\huaB}c_{k}(f)\Big)(a_1,\cdots, a_{k+1})\\
&=(-1)^{k+1}\Big((\Id_V+T)\dM^\Theta f(\huaB(a_1), \cdots, \huaB(a_{k+1}))-T(\dM^\Theta f(a_1\cdot_G\huaB(a_1), \cdots, a_{k+1}\cdot_G\huaB(a_{k+1})))\Big)\\
&\quad +\Theta_\huaB(a_1)c_k(f)(a_2, \cdots, a_{k+1})+\sum_{i=1}^{k}(-1)^{i}c_k(f)(a_1, \cdots, a_i\star a_{i+1}, \cdots, a_{k+1})+(-1)^{k+1}c_k(f)(a_1, \cdots, a_k)\\
&=(-1)^{k+1}(\Id_V+T)\Big(\Theta(\huaB(a_1))f(\huaB(a_2), \cdots, \huaB(a_{k+1}))+\sum_{i=1}^{k}(-1)^{i}f(\huaB(a_1), \cdots, \huaB(a_i)\cdot_G\huaB(a_{i+1}),\cdots,\huaB(a_{k+1}))\\
&\quad+(-1)^{k+1}f(\huaB(a_1),\cdots, \huaB(a_k))\Big)-(-1)^{k+1}T\Big(\Theta(a_1\cdot_G\huaB(a_1))f(a_2\cdot_G\huaB(a_2), \cdots, a_{k+1}\cdot_G\huaB(a_{k+1}))\\
&\quad+\sum_{i=1}^{k}(-1)^{i}f(a_1\cdot_G\huaB(a_1),\cdots,a_i\cdot_G\huaB(a_i)\cdot_G a_{i+1}\cdot_G\huaB(a_{i+1}),\cdots, a_{k+1}\cdot_G\huaB(a_{k+1}))\\
&\quad+(-1)^{k+1}f(a_1\cdot_G\huaB(a_1), \cdots, a_{k}\cdot_G\huaB(a_{k})))\Big)+(-1)^{k}(\Id_V+T)\Theta(\huaB(a_1))\Big((\Id_V+T)f(\huaB(a_2), \cdots, \huaB(a_{k+1}))\\
&\quad-T(f(a_2\cdot_G\huaB(a_2), \cdots, a_{k+1}\cdot_G\huaB(a_{k+1})))\Big)-(-1)^{k}T\Big(\Theta(a_1\cdot_G\huaB(a_1))\Big((\Id_V+T)f(\huaB(a_2), \cdots, \huaB(a_{k+1}))\\
&\quad-T(f(a_2\cdot_G\huaB(a_2), \cdots, a_{k+1}\cdot_G\huaB(a_{k+1})))\Big)\Big)+\sum_{i=1}^{k}(-1)^{i+k}\Big((\Id_V+T)f(\huaB(a_1), \cdots, \huaB(a_i)\cdot_G\huaB(a_{i+1}), \cdots, \huaB(a_k))\\
&\quad-T(f(a_1\cdot_G\huaB(a_1), \cdots, a_i\cdot_G\huaB(a_i)\cdot_G a_{i+1}\cdot_G\huaB(a_{i+1}),\cdots, a_k\cdot_G\huaB(a_k)))\Big)\\
&\quad+(-1)^{2k+1}\Big((\Id_V+T)f(\huaB(a_1), \cdots, \huaB(a_k))-T(f(a_1\cdot_G\huaB(a_1), \cdots, a_k\cdot_G\huaB(a_k)))\Big)\\
&=(-1)^{k}\Big((\Id_V+T)\Theta(\huaB(a_1))T\Big(f(\huaB(a_2), \cdots, \huaB(a_{k+1}))\Big)+T\Big(\Theta(a_1\cdot_G\huaB(a_1))f(a_2\cdot_G\huaB(a_2), \cdots, a_{k+1}\cdot_G\huaB(a_{k+1}))\Big)\\
&\quad-(\Id_V+T)\Theta(\huaB(a_1))T\Big(f(a_2\cdot_G\huaB(a_2), \cdots, a_{k+1}\cdot_G\huaB(a_{k+1}))\Big)\\
&\quad-T\Big(\Theta(a_1\cdot_G\huaB(a_1))(\Id_V+T)f(\huaB(a_2), \cdots, \huaB(a_{k+1}))\Big)\\
&\quad+T\Big(\Theta(a_1\cdot_G\huaB(a_1))T(f(a_2\cdot_G\huaB(a_2), \cdots, a_{k+1}\cdot_G\huaB(a_{k+1})))\Big)\Big).
\end{align*}
By the fact that 
\begin{equation*}
	\Theta(\huaB(a))T(u)=T\Big(\Theta(a\cdot_G\huaB(a))u+\Theta(a\cdot_G\huaB(a))T(u)-\Theta(\huaB(a))T(u)\Big), \quad \forall a\in G, ~u\in V,
\end{equation*}
we have 
\begin{equation*}
	(\Id_V+T)\Theta(\huaB(a_1))T(f(\huaB(a_2), \cdots, \huaB(a_{k+1})))=T(\Theta(a_1\cdot_G\huaB(a_1))(\Id_V+T)f(\huaB(a_2), \cdots, \huaB(a_{k+1}))),
\end{equation*}
and
\begin{align*}
&\quad(\Id_V+T)\Theta(\huaB(a_1))T\Big(f(a_2\cdot_G\huaB(a_2), \cdots, a_{k+1}\cdot_G\huaB(a_{k+1}))\Big)\\
&= T\Big(\Theta(a_1\cdot_G\huaB(a_1))f(a_2\cdot_G\huaB(a_2), \cdots, a_{k+1}\cdot_G\huaB(a_{k+1}))\Big)\\
&\quad+T\Big(\Theta(a_1\cdot_G\huaB(a_1))T(f(a_2\cdot_G\huaB(a_2), \cdots, a_{k+1}\cdot_G\huaB(a_{k+1})))\Big).
\end{align*}
Thus $c_{k+1}(\dM^\Theta f)+\dM^{\theta_\huaB}c_k(f)=0$, which means that $c$ is a cochain map.
\end{proof}

Let $(V, T, \Theta)$ be a representation of a Rota-Baxter Lie group $(G, \huaB)$. We define the space of $1$-cochains $C^{1}(G, \huaB, V, T)$ to be $C^1(G, V)$, and for $n\geq 2$, the space of $n$-cochains $C^{n}(G, \huaB, V, T)$ by
\begin{equation*}
	C^{n}(G, \huaB, V, T)=C^{n}(G, V)\oplus C^{n}(\huaB, T).
\end{equation*}
The coboundary operator $\delta: C^{n}(G, \huaB, V, T)\to C^{n+1}(G, \huaB, V, T)$ is defined by
\begin{equation}\label{defidgco}
\delta (\alpha_n, \beta_n)=(\dM^{\Theta}\alpha_n, \dM^{\huaB}\beta_{n-1}+c_n(\alpha_n)),
\end{equation}
where $\alpha_n\in C^{n}(G, V)$ and $\beta_{n-1}\in C^{n}(\huaB, T)$.

\begin{thm}
	With the above notations, $(\oplus_{i=1}^{+\infty}C^{i}(G, \huaB, V, T), \delta)$ is a cochain complex, i.e. $\delta\circ\delta=0$.
\end{thm}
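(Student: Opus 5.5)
This is the standard mapping-cone argument, and it rests entirely on two earlier facts. First, $\dM^\Theta\circ\dM^\Theta=0$ and $\dM^\huaB\circ\dM^\huaB=0$ hold because these are the coboundary operators of the normalized group cohomology of $G$ with coefficients in $(V,\Theta)$, and of $G_\huaB$ with coefficients in $(V,\Theta_\huaB)$. The latter is a genuine representation by Proposition \ref{defirB}, so the classical identity applies. Second, the cochain-map identity $c\circ\dM^\Theta+\dM^\huaB\circ c=0$ of Proposition \ref{chto} supplies exactly the relation needed to cancel the cross terms.

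Concretely, take $(\alpha_n,\beta_{n-1})\in C^n(G,\huaB,V,T)$ and apply $\delta$ twice using \eqref{defidgco}:
\begin{align*}
\delta\delta(\alpha_n,\beta_{n-1})&=\delta\big(\dM^\Theta\alpha_n,\ \dM^\huaB\beta_{n-1}+c_n(\alpha_n)\big)\\
&=\big(\dM^\Theta\dM^\Theta\alpha_n,\ \dM^\huaB\dM^\huaB\beta_{n-1}+\dM^\huaB c_n(\alpha_n)+c_{n+1}(\dM^\Theta\alpha_n)\big).
\end{align*}
The first component and the term $\dM^\huaB\dM^\huaB\beta_{n-1}$ vanish by the first fact. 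The remaining sum $\dM^\huaB c_n(\alpha_n)+c_{n+1}(\dM^\Theta\alpha_n)$ is zero by Proposition \ref{chto}; the sign $(-1)^n$ in \eqref{dec} was chosen precisely so that this anticommutation holds.

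Two bookkeeping points need checking; neither is a real obstacle. The first is the degree indexing. The statement writes $C^n(\huaB,T)=C^{n-1}(G,V)$, identified as cochains on $G_\huaB$, and $c_n$ lands in $C^{n+1}(\huaB,T)=C^n(G_\huaB,V)$. One has to confirm that in $\delta$ the term $\dM^\huaB\beta$ and the term $c_n(\alpha_n)$ sit in the same degree, and that the $\dM^\huaB$ used is the group differential of $G_\huaB$ raising cochain degree by one. The second is the low-degree case $n=1$, where $C^1(G,\huaB,V,T)=C^1(G,V)$ has no $\beta$-component. Here $\delta\alpha_1=(\dM^\Theta\alpha_1,c_1(\alpha_1))$, and the same computation with $\beta=0$ gives $\delta\delta\alpha_1=(0,\dM^\huaB c_1(\alpha_1)+c_2(\dM^\Theta\alpha_1))=0$.

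It also needs confirming that $c_n$ preserves normalization. This follows because $\huaB(e_G)=e_G$, which comes from \eqref{defiB} with $g=h=e_G$. Hence if some $a_i=e_G$, then both $\huaB(a_i)$ and $a_i\cdot_G\huaB(a_i)$ equal $e_G$, and $c_n(f)$ vanishes there.
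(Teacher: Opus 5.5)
Your proof is correct and is the paper's argument: the paper only says the identity follows from Proposition \ref{chto}, and your expansion of $\delta\circ\delta$, using $\dM^\Theta\circ\dM^\Theta=0$, $\dM^\huaB\circ\dM^\huaB=0$ (which holds because $\Theta_\huaB$ is a representation) and the anticommutation $c\circ\dM^\Theta+\dM^\huaB\circ c=0$, spells out that mapping-cone computation. Your extra checks are also correct: the truncated degree $n=1$, and normalization via $\huaB(e_G)=e_G$, are bookkeeping the paper leaves implicit.
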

\begin{proof}
This follows from Proposition \ref{chto}.
\end{proof}

\begin{defi}
The cohomology of the cochain complex $(\oplus_{i=1}^{+\infty}C^{i}(G, \huaB, V, T), \delta)$ is called
the cohomology of the Rota-Baxter Lie group $(G, \huaB)$ with coefficients in the representation 
$(V, T, \Theta)$. The corresponding $n$-th cohomology group is denoted by $H^n(G, \huaB, V, T)$.
\end{defi}

We note that in \cite{DR}, the authors constructed the chain complex for Rota–Baxter Lie groups only up to degree $3$, together with the first and second cohomology groups; as an application, they used the second cohomology group to study abelian extensions of Rota–Baxter Lie groups. The chain complex introduced there differs from the one considered here: in \cite{DR}, $V$ is equipped with a right $G$-module structure, whereas in our setting $V$ carries a left $G$-module structure. Moreover, we construct the full cochain complex $(\oplus_{i=1}^{+\infty}C^{i}(G, \huaB, V, T), \delta)$ in arbitrary degree, rather than restricting to low-degree terms. In subsequent sections, we will also use the cohomology obtained here to study central extensions of Rota-Baxter Lie groups.

The relation between $(\oplus_{i=1}^{+\infty}C^{i}(G, V), \dM^\Theta), (\oplus_{i=1}^{+\infty}C^{i}(\huaB, T), \dM^\huaB)$ and $(\oplus_{i=1}^{+\infty}C^{i}(G, \huaB, V, T), \delta)$ is given by the following theorem, which 
is resemblance of the Mayer-Vietoris sequence.

\begin{thm}\label{logco}
	There is a short exact sequence of the  cochain complexes:
	$$
	0\longrightarrow(\oplus_{n=1}^{+\infty}C^{n}(\huaB, T ),\dM^{\huaB})\stackrel{\frak{i}}{\longrightarrow}(\oplus_{n=1}^{+\infty}C^n(G, \huaB, V, T), \delta)\stackrel{\frak{p}}{\longrightarrow} (\oplus_{n=1}^{+\infty}C^n(G, V),\dM^{\Theta})\longrightarrow 0,
	$$
	where $\frak{i}(\beta_{n-1})=(0,\beta_{n-1})$ and $\frak{p}(\alpha_{n},\beta_{n-1})=\alpha_{n}$ for all $\beta_{n-1}\in C^{n}(\huaB, T)$ and $\alpha_{n}\in C^{n}(G, V)$.
	
	Consequently,
	there is a long exact sequence of the  cohomology groups:
	$$
	\cdots\longrightarrow H^{n}(\huaB, T)\stackrel{\frak{i}_{*}}{\longrightarrow} H^n(G,\huaB, V, T)\stackrel{\frak{p}_{*}}{\longrightarrow} H^n(G, V)\stackrel{\frkk^n}\longrightarrow H^{n+1}(\huaB, T)\longrightarrow\cdots,
	$$
	where the connecting map $\frkk^n$ is defined by
	\begin{equation}\label{connmap}
		\frkk^n([\alpha_n])=[c_n(\alpha_n)],\quad  \forall [\alpha_n]\in H^{n}(G, V).
	\end{equation}
\end{thm}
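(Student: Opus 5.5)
The argument is the standard mapping-cone one; the only real content is checking signs and degrees so that the connecting homomorphism comes out as $c_n$ with no extra sign.

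\textbf{Step 1: the maps are cochain maps.} By definition $C^n(G,\huaB,V,T)=C^n(G,V)\oplus C^n(\huaB,T)$, with the convention $C^1(\huaB,T)=0$. So degreewise $\frak{i}$ is the inclusion of the second summand and $\frak{p}$ is the projection onto the first. This gives exactness of
$$0\to C^n(\huaB,T)\to C^n(G,\huaB,V,T)\to C^n(G,V)\to 0$$
at every spot, including $n=1$, where $C^1(G,\huaB,V,T)=C^1(G,V)$ and the first term vanishes.
\begin{itemize}
\item For $\frak{p}$: by \eqref{defidgco},
$$\frak{p}\,\delta(\alpha_n,\beta_{n-1})=\dM^\Theta\alpha_n=\dM^\Theta\,\frak{p}(\alpha_n,\beta_{n-1}).$$
\item For $\frak{i}$: we have
$$\delta\,\frak{i}(\beta_{n-1})=\delta(0,\beta_{n-1})=(0,\dM^\huaB\beta_{n-1}+c_n(0))=\frak{i}(\dM^\huaB\beta_{n-1}),$$
using linearity of $c_n$.
\end{itemize}
Here I only need that $\delta$ is a differential (the preceding theorem, resting on Proposition \ref{chto}) and that $\dM^\huaB$ is the group coboundary of $G_\huaB$ with values in $\Theta_\huaB$ (Proposition \ref{defirB}). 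Degree bookkeeping: $\beta_{n-1}\in C^n(\huaB,T)=C^{n-1}(G_\huaB,V)$, so $\dM^\huaB\beta_{n-1}\in C^{n+1}(\huaB,T)$.

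\textbf{Step 2: the long exact sequence.} Given a short exact sequence of cochain complexes, the snake lemma yields the long exact sequence
$$\cdots\to H^n(\huaB,T)\to H^n(G,\huaB,V,T)\to H^n(G,V)\to H^{n+1}(\huaB,T)\to\cdots.$$
What remains is to identify the connecting map.

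\textbf{Step 3: computing the connecting map.} Follow the usual zig-zag.
\begin{itemize}
\item Take a cocycle $\alpha_n\in C^n(G,V)$, so $\dM^\Theta\alpha_n=0$.
\item Lift it to $(\alpha_n,0)\in C^n(G,\huaB,V,T)$.
\item Apply $\delta$: $\delta(\alpha_n,0)=(0,c_n(\alpha_n))$, which equals $\frak{i}(c_n(\alpha_n))$.
\end{itemize}
Hence $\frkk^n[\alpha_n]=[c_n(\alpha_n)]$. Two points need checking.
\begin{itemize}
\item $c_n(\alpha_n)$ is a $\dM^\huaB$-cocycle. By Proposition \ref{chto},
$$\dM^\huaB c_n(\alpha_n)=-c_{n+1}(\dM^\Theta\alpha_n)=0.$$
\item The class is independent of the representative. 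If $\alpha_n$ is replaced by $\alpha_n+\dM^\Theta\gamma_{n-1}$, then by the same identity $c_n(\alpha_n)$ changes by
$$c_n(\dM^\Theta\gamma)=-\dM^\huaB c_{n-1}(\gamma),$$
which is a coboundary. Changing the lift by $\frak{i}(\beta)$ likewise alters $\delta$ of the lift only by the coboundary $\dM^\huaB\beta$.
\end{itemize}

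\textbf{Main obstacle.} Nothing here is deep. The only care needed is the low-degree edge case $n=1$ and the sign convention in $c_n$, so that the connecting map is exactly $c_n$ rather than $-c_n$. Because \eqref{defidgco} puts $+c_n(\alpha_n)$ into $\delta$, the zig-zag produces $+c_n$ directly, matching \eqref{connmap}.
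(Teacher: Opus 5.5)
Your proposal is correct and follows the paper's proof: the sequence is split exact in each degree, the long exact sequence comes from the standard snake-lemma argument, and the connecting map is identified by lifting a cocycle $\alpha_n$ to $(\alpha_n,0)$ and observing $\delta(\alpha_n,0)=(0,c_n(\alpha_n))$. You also write out details the paper leaves implicit: the cochain-map checks for the inclusion and projection, and well-definedness of the connecting map via Proposition \ref{chto}.
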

\begin{proof}
	By \eqref{defidgco}, we have the short exact sequence  of a chain complexes which induces a long exact sequence of cohomology groups. Moreover, if $\dM^{\Theta}\alpha_n=0$, then we can chose $(\alpha_n, 0)\in C^{n}(G, \huaB, V, T)$ such that $\frak{p}(\alpha_n, 0)=\alpha_n$. Since $\delta(\alpha_n, 0)=(\dM^{\Theta}\alpha_n, c_n(\alpha_n))$, it follows that $\frkk^{n}([\alpha_n])=[c_n(\alpha_n)]$.
\end{proof}

\section{A van Est theorem for cohomology of Rota-Baxter Lie groups}\label{secvan}
In this section, we establish the van Est theorem relating the cohomology of Rota-Baxter Lie groups to that of Rota-Baxter Lie algebras, which can be viewed as a justification for our cohomology theory of Rota-Baxter Lie groups. The classical van Est theorem \cite{Van} relates the smooth cohomology of a Lie group to the cohomology of its associated Lie algebra. See \cite{AC, CD, Cr, Hou, JSZ, JLS, Li, MS, Pfl, WX} for various generalizations of the van Est theorem.

Let $(\g, B)$ be a Rota-Baxter Lie algebra. In \cite{DAS}, the authors introduced a cohomology theory for Rota-Baxter Lie algebras, which we briefly review here.

Let $(V, T, \theta)$ be a representation of $(\g, B)$.  Define the space of $1$-cochains $C^1(\g, B, V, T)$ to be $\Hom(\g,V)$. For $n\geq 2$, define the space of $n$-cochains $C^n(\g, B, V, T)$  by
$$
C^n(\g,B, V, T)=C^{n}(\g, V)\oplus C^{n}(B, T),
$$
where $C^{n}(\g, V)=\Hom(\wedge^{n}\g,V)$, and $C^{1}(B, T)=0, C^{n}(B, T)=\Hom(\wedge^{n-1}\g ,V)$ for $n\geq2$.

Define the coboundary operator
\begin{equation*}
	\dM: C^n(\g,B, V, T)\lon C^{n+1}(\g, B, V, T)
\end{equation*}
by
\begin{equation}\label{deficodlg}
	\dM(\zeta_n, \xi_{n-1})=(\dM^{\theta}\zeta_n, \dM^{\theta_{B}}\xi_{n-1}+K_n(\zeta_n)),
\end{equation}
for all $\zeta_n\in\Hom(\wedge^{n}\g,V), \xi_{n-1}\in\Hom(\wedge^{n-1}\g,V)$,
where $\dM^{\theta}$ and $\dM^{\theta_B}$ are the Chevalley-Eilenberg coboundary operators of the Lie algebra $\g$ with coefficients in the representation $(V, \theta)$ and the Lie algebra $\g_B$ with coefficients in the representation $(V, \theta_B)$ respectively. Here $\theta_B: \g\lon\gl(V)$ is the representation of $\g_B$ on $V$ defined by $$\theta_B(x)u=\theta(B(x))u-T(\theta(x)u),\quad \forall x\in\g, u\in V,$$
and $K_n: \Hom(\wedge^{n}\g,V)\rightarrow\Hom(\wedge^{n}\g, V)$ is defined by
\begin{eqnarray*}
	&&K_n(\zeta_{n})(x_1,\cdots,x_{n})\\
	\nonumber&=&(-1)^{n}\Big(\zeta_n(B(x_1), \cdots, B(x_n))-\sum_{k=0}^{n-1}\sum_{1\leq i_1<\cdots<i_k\leq n}T\Big(\zeta_n(x_1,\cdots,x_{i-1},B(x_{i_1}),\cdots,B(x_{i_k}),\cdots,x_n)\Big)\Big).
\end{eqnarray*}
The corresponding $n$-th cohomology group is denoted by $H^{n}(\g, B, V, T)$. Moreover, we denote $n$-th cohomology groups of $(\oplus_{n=1}^{+\infty}C^{n}(\g, V), \dM^{\theta})$ and $(\oplus_{n=1}^{\infty}C^{n}(B, T), \dM^{\theta_B})$ by $H^{n}(\g, V)$ and $H^{n}(B, T)$ respectively. Then there is the following theorem.

\begin{thm}{\rm (\cite{DAS})}
	With the above notations, there is a short exact sequence of the  cochain complexes:
	$$
	0\longrightarrow(\oplus_{n=1}^{+\infty}C^{n}(B, T),\dM^{\theta_B})\stackrel{\iota}{\longrightarrow}(\oplus_{n=1}^{+\infty}C^n(\g, B, V, T), \dM)\stackrel{p}{\longrightarrow} (\oplus_{n=1}^{+\infty}C^{n}(\g, V),\dM^{\theta})\longrightarrow 0,
	$$
	where $\iota(\xi_{n-1})=(0,\xi_{n-1})$ and $p(\zeta_{n},\xi_{n-1})=\zeta_{n}$ for all $\xi_{n-1}\in \Hom(\wedge^{n-1}\g, V)$ and $\zeta_{n}\in \Hom(\wedge^{n}\g, V)$.
	
	Consequently,
	there is a long exact sequence of the  cohomology groups:
	$$
	\cdots\longrightarrow H^{n}(B, T)\stackrel{\iota_{*}}{\longrightarrow} H^n(\g, B, V, T)\stackrel{p_{*}}{\longrightarrow} H^n(\g, V)\stackrel{k^n}\longrightarrow H^{n+1}(B, T)\longrightarrow\cdots,
	$$
	where the connecting map $k^n$ is defined by
	\begin{equation}\label{tranco}
		k^n([\zeta_n])=[K_n(\zeta_n)],  \quad \forall [\zeta_n]\in H^{n}(\g, V).
	\end{equation}
\end{thm}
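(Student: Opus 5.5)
The statement has two parts: a short exact sequence of complexes, and the long exact sequence it induces. I would reduce everything to one fact: $K=\oplus_n K_n$ anticommutes with the differentials, i.e. $K_{n+1}\circ \dM^{\theta}+\dM^{\theta_B}\circ K_n=0$. This is the infinitesimal analogue of Proposition \ref{chto}, and the proof should follow that one.

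\textbf{Rewriting $K_n$.} In the sum over $k<n$ and $i_1<\cdots<i_k$, the argument of $\zeta_n$ has $B$ applied to a proper subset of the slots and the identity elsewhere. Multilinearity then gives
$$K_n(\zeta_n)(x_1,\cdots,x_n)=(-1)^n\Big((\Id_V+T)\zeta_n(B(x_1),\cdots,B(x_n))-T\zeta_n\big((\Id+B)(x_1),\cdots,(\Id+B)(x_n)\big)\Big).$$
This is exactly the shape of $c_n$ in \eqref{dec}, with $B$ replacing $\huaB$ and $\Id+B$ replacing $a\mapsto a\cdot_G\huaB(a)$. Both $B$ and $\Id+B$ are Lie algebra homomorphisms $\g_B\to\g$; this is a direct consequence of the weight $1$ Rota-Baxter identity. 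Consequently the pullbacks $B^*$ and $(\Id+B)^*$ commute with the Chevalley-Eilenberg differentials. More precisely, $B^*\circ\dM^{\theta}=\dM^{\theta\circ B}\circ B^*$ and $(\Id+B)^*\circ \dM^{\theta}=\dM^{\theta\circ(\Id+B)}\circ(\Id+B)^*$. Here $\theta\circ B$ and $\theta\circ(\Id+B)$ are the pulled-back representations of $\g_B$ on $V$.

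\textbf{Main step: anticommutation.} Expand $K_{n+1}(\dM^\theta\zeta)+\dM^{\theta_B}K_n(\zeta)$ using the two commutation rules above. All terms involving brackets $[x_i,x_j]_B$ cancel between the two pieces, as in the group case. What survives are only the action terms in the first slot. By the same bookkeeping as in Proposition \ref{chto}, the remainder is, up to sign, a combination of
$$(\Id+T)\theta(Bx)T\,\zeta(B\cdots),\quad T\theta(x+Bx)(\Id+T)\zeta(B\cdots),\quad (\Id+T)\theta(Bx)T\,\zeta((\Id+B)\cdots),$$
$$T\theta(x+Bx)\zeta((\Id+B)\cdots),\quad T\theta(x+Bx)T\zeta((\Id+B)\cdots).$$
Condition \eqref{defth} says $\theta(Bx)T=T(\theta(Bx)+\theta(x)T+\theta(x))$. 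Rearranged, this gives two identities:
$$(\Id+T)\theta(Bx)T=T\theta(x+Bx)(\Id+T),$$
$$(\Id+T)\theta(Bx)T=T\theta(x+Bx)+T\theta(x+Bx)T.$$
These are the Lie algebra counterparts of the two identities used at the end of Proposition \ref{chto}, and they kill the remainder. I expect this sign and term bookkeeping to be the only real obstacle. The concrete thing to check is that $\theta_B(x)=\theta(Bx)-T\theta(x)$ acting on $(\Id+T)(\cdot)$ and on $T(\cdot)$ produces exactly these terms.

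\textbf{Conclusion.} Given the anticommutation, \eqref{deficodlg} yields $\dM\circ\dM=(\dM^\theta\dM^\theta,\ \dM^{\theta_B}\dM^{\theta_B}+\dM^{\theta_B}K+K\dM^{\theta})=0$. The maps $\iota$ and $p$ are plainly cochain maps, $\iota$ is injective, $p$ is surjective, and $\ker p=\operatorname{im}\iota$ holds degreewise. The long exact sequence is then the standard one. For the connecting map, lift a cocycle $\zeta_n$ to $(\zeta_n,0)$. Its differential is $(0,K_n(\zeta_n))=\iota(K_n(\zeta_n))$, which gives \eqref{tranco}, just as in the proof of Theorem \ref{logco}.
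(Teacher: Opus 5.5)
Your proposal is correct and follows the same argument the paper relies on for this result, which it quotes from \cite{DAS}: the short exact sequence is read off from \eqref{deficodlg}, and the connecting map comes from lifting a cocycle $\zeta_n$ to $(\zeta_n,0)$, exactly as in the proof of Theorem \ref{logco}. What you add is a self-contained proof of $\dM\circ\dM=0$ via $K_{n+1}\circ\dM^{\theta}+\dM^{\theta_B}\circ K_n=0$, which the paper takes from \cite{DAS}; that check is sound, with one small correction and one simplification: in the Chevalley--Eilenberg differential the leftover action terms occur for every $x_i$, not only the first slot, and each is killed by the single identity $\theta_B(x)T=T\theta((\Id+B)x)$, equivalently $\theta_B(x)(\Id_V+T)=(\Id_V+T)\theta(B(x))$, which is just \eqref{defth} (so your two displayed identities are really one).
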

\emptycomment{
\begin{proof}
	By \eqref{deficodlg}, we have the short exact sequence  of chain complexes which induces a long exact sequence of cohomology groups. Moreover, if $\dM^{\theta}\zeta_n=0$, then we can chose $(\zeta_n, 0)\in C^{n}(\g, B, V, T)$ such that $p(\zeta_n, 0)=0$, which implies that $\dM(\zeta_n, 0)=(\dM^{\theta}\zeta_n, K(\zeta_n))$. Thus $k^{n}([\zeta_n])=[K(\zeta_n)]$.
\end{proof}
}
Let $G$ be a Lie group with Lie algebra $\g$, and let
$\Pi:G\to \GL(V)$ be a representation of $G$ with induced Lie algebra representation $\pi:\g\to\gl(V)$.
Define the van Est map $$\mathrm{VE}_n: C^{n}(G, V)\lon C^{n}(\g, V)$$ by
\begin{eqnarray*}
	&&\mathrm{VE}_n(\alpha_n)(x_1, \cdots, x_n)\\
	&=&\sum_{\epsilon\in S_{n}}(-1)^{|\epsilon|}\frac{d}{dt_{\epsilon(1)}}\cdots\frac{d}{dt_{\epsilon(n)}}\bigg|_{t_{\epsilon(1)}=\cdots=t_{\epsilon(n)}=0}\alpha_{n}\Big(\exp(t_{\epsilon(1)}x_{\epsilon(1)}), \cdots, \exp(t_{\epsilon(n)}x_{\epsilon(n)})\Big), \quad \forall x_1,\cdots, x_n\in\g.
\end{eqnarray*}
As in the classical theory relating the cohomology of Lie groups and Lie algebras, the collection $\mathrm{VE}_n$ assembles into a cochain map
 $$\mathrm{VE}: \oplus_{n=1}^{\infty} C^{n}(G, V)\lon\oplus_{n=1}^{\infty}C^{n}(\g, V),$$ which induces   homomorphisms ${\mathrm{VE}_{n}}_{*}:H^{n}(G, V)\to H^{n}(\g,, V)$ on the level of cohomology. Under suitable conditions on $G$, these homomorphisms are in fact isomorphisms, as the following classical theorem of van Est shows.
 
 \begin{thm}[van Est, \cite{Van}]\label{van} 
 	Let $G$ be a connected Lie group such that $\pi_i(G)=0$ for $1\leq i\leq n$. Then for every $1\leq i\leq n$, the van Est map induces an isomorphism
 	 \begin{equation*} 
 	 	H^{i}(G,V)\cong H^{i}(\g,V). 
 	 	\end{equation*} 
 \end{thm}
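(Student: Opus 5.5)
The statement is the classical theorem of van Est \cite{Van}. I would prove it with the standard double complex argument in the form used by Crainic \cite{Cr}, and then check that the resulting comparison isomorphism is the map $\mathrm{VE}_*$ defined above.

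\textbf{The double complex.} For $p,q\geq 0$ set
\begin{equation*}
C^{p,q}=\Omega^q(G^{p+1},V)^G,
\end{equation*}
the $V$-valued $q$-forms on $G^{p+1}$ that are invariant under the diagonal left translation of $G$ together with $\Pi$ acting on $V$.
\begin{itemize}
\item The vertical differential is the de Rham differential.
\item The horizontal differential is the alternating sum of pullbacks along the face maps $G^{p+2}\to G^{p+1}$ that delete one coordinate.
\end{itemize}
These two differentials commute, so we get a double complex. It has two edges.
\begin{itemize}
\item The edge $q=0$ is the complex of smooth $G$-equivariant homogeneous cochains $G^{p+1}\to V$. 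Passing to inhomogeneous cochains via $\alpha(g_1,\dots,g_p)=\tilde\alpha(e,g_1,g_1g_2,\dots)$ identifies it with $(C^\bullet(G,V),\dM^\Pi)$. Up to the normalized-cochain remark, this edge computes $H^\bullet(G,V)$.
\item The edge $p=0$ is $\Omega^\bullet(G,V)^G$. Evaluation at $e$ identifies it with $(\wedge^\bullet\g^*\otimes V,\dM^\pi)$, so this edge computes $H^\bullet(\g,V)$.
\end{itemize}

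\textbf{Exactness of rows and columns.}
\begin{itemize}
\item \emph{Rows (fixed $q$).} The $p$-complex $\Omega^q(G^{\bullet+1},V)^G$ is exact in positive degrees, and its cohomology in degree $0$ is $\Omega^q(G,V)^G$. The reason is that the simplicial manifold $G^{\bullet+1}$ has an extra degeneracy, namely inserting $e$ in the first slot. This gives a contracting homotopy $h$ on forms. The homotopy must preserve $G$-invariance, so I would take the averaging-free version: use the coordinate $g_0$ to trivialize. Concretely, $\Omega^q(G^{p+1},V)^G\cong\Omega^q(G^p,V)$ via the free diagonal action, and on the quotient the face-map complex is the standard one for the action groupoid $G\ltimes G\rightrightarrows G$, which is Morita equivalent to a point and therefore acyclic.
\item \emph{Columns (fixed $p\geq 1$).} The free diagonal action gives $\Omega^\bullet(G^{p+1},V)^G\cong\Omega^\bullet(G^p,V)$. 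The vertical cohomology is therefore $H^\bullet_{dR}(G^p)\otimes V$. Since $\pi_i(G)=0$ for $1\leq i\leq n$, each $G^p$ is $n$-connected. By Hurewicz and de Rham, $H^q_{dR}(G^p)=0$ for $1\leq q\leq n$, and $H^0_{dR}(G^p)=\mathbb{R}$ because $G$ is connected.
\end{itemize}

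\textbf{Comparison.} I would compare the two edges with the total complex using the two spectral sequences, or equivalently by an elementary zig-zag diagram chase.
\begin{itemize}
\item Row exactness shows that the inclusion of the $p=0$ edge into the total complex is a quasi-isomorphism.
\item Column exactness in degrees $1,\dots,n$ shows that the inclusion of the $q=0$ edge induces isomorphisms on $H^i$ for $i\leq n$, and an injection for $i=n+1$.
\end{itemize}
Composing the first with the inverse of the second gives $H^i(G,V)\cong H^i(\g,V)$ for $1\leq i\leq n$.

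\textbf{Main obstacle.} The step I expect to be hardest is showing that this zig-zag isomorphism equals $\mathrm{VE}_*$ as defined by the explicit derivative formula. My plan is as follows.
\begin{itemize}
\item Write the composite explicitly. A homogeneous cocycle $\tilde\alpha$ is moved up the zig-zag by applying $\dM_{dR}$ and then the homotopy $h$, $i$ times. The result is the $G$-invariant $i$-form obtained by differentiating $\tilde\alpha(e,g_1,\dots,g_i)$ once in each variable at $e$ and antisymmetrizing.
\item Evaluate this form at $e$ on $x_1,\dots,x_i$. This produces $\sum_\epsilon(-1)^{|\epsilon|}\partial_{t_{\epsilon(1)}}\cdots\partial_{t_{\epsilon(i)}}\alpha(\exp t_{\epsilon(1)}x_{\epsilon(1)},\dots)$, up to a global sign or a factor depending only on $i$.
\item Normalization of cochains kills the lower-order derivative terms that arise.
\end{itemize}
This reduces to a careful but routine induction on $i$. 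It also shows directly that $\mathrm{VE}$ is a cochain map.
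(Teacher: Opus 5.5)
There is a genuine gap: the double complex $C^{p,q}=\Omega^q(G^{p+1},V)^G$ does not have the exactness properties you claim, so the comparison argument never gets started.

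\textbf{Rows.} The $q=0$ row is, as you say yourself, the homogeneous cochain complex of $G$. So ``every row is exact in positive degrees'' would force $H^p(G,V)=0$ for all $p>0$. This already fails for $G=V=\mathbb{R}$ with trivial action, where $H^1=\Hom(\mathbb{R},\mathbb{R})\neq 0$. In particular no $G$-equivariant contraction of that row can exist, which is why your extra degeneracy could not be made invariant. The degree-zero claim is also wrong. On $C^{0,q}$ the horizontal differential is $\mathrm{pr}_2^*-\mathrm{pr}_1^*$, and its kernel is $0$ for $q\geq 1$, not $\Omega^q(G,V)^G$.

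\textbf{Columns.} The column claim rests on $\Omega^q(G^{p+1},V)^G\cong\Omega^q(G^p,V)$. This holds only for $q=0$, because $G$-invariant forms on the principal bundle $G^{p+1}\to G^{p+1}/G$ are not basic. Trivialize $G^{p+1}\cong G\times G^p$ by $(g_0,\dots,g_p)\mapsto(g_0,g_0^{-1}g_1,\dots,g_0^{-1}g_p)$. This identifies the $p$-th column with $(\wedge^\bullet\g^*\otimes V,\dM^\pi)\otimes\Omega^\bullet(G^p)$. Its cohomology in each degree $q\leq n$ is $H^q(\g,V)$, not $0$; your own $p=0$ edge is the special case.

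\textbf{Edges.} The literal $p=0$ column and $q=0$ row are quotient complexes of the total complex, not subcomplexes. So the ``inclusions'' you compose are not chain maps.

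\textbf{The complex that works.} The standard double complex (Hochschild--Mostow, van Est, Crainic) puts forms in only one additional variable:
$C^{p,q}=C^\infty\big(G^{p+1},\Omega^q(G,V)\big)^G$.
These are homogeneous $p$-cochains with values in the $G$-module $\Omega^q(G,V)$, with action $g\cdot\omega=\Pi(g)(L_{g^{-1}})^*\omega$. Then:
\begin{itemize}
\item \emph{Edges.} Both edges are augmentations. The kernel of the horizontal differential on the $p=0$ column is $\Omega^\bullet(G,V)^G\cong(\wedge^\bullet\g^*\otimes V,\dM^\pi)$. The kernel of the de Rham differential on the $q=0$ row is $C^\infty(G^{\bullet+1},V)^G$; this uses connectedness.
\item \emph{Rows.} They are exact in positive degrees because $\Omega^q(G,V)\cong C^\infty(G,\wedge^q\g^*\otimes V)$ is an induced module. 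An explicit equivariant contraction is $(hc)(g_0,\dots,g_{p-1})_x=c(x,g_0,\dots,g_{p-1})_x$. The point $x$ at which the form is evaluated supplies the missing vertex.
\item \emph{Columns.} They are exact in degrees $1\leq q\leq n$ because $H^q_{\mathrm{dR}}(G)=0$ there, applied with smooth dependence on the parameters in $G^{p+1}$. Only $G$ itself enters, not $G^p$.
\end{itemize}
With this complex your zig-zag argument goes through, giving an isomorphism for $i\leq n$ and injectivity for $i=n+1$. Identifying the resulting map with the explicit formula for $\mathrm{VE}$ is then the computation carried out by Crainic. The paper itself gives no proof of this statement; it simply quotes van Est.
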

 
Let $(G, \huaB)$ be a Rota-Baxter Lie group and let $(\g, B)$ be the corresponding Rota-Baxter Lie algebra. Let $(V, T, \Theta)$ be a representation of $(G, \huaB)$. By Proposition \ref{diffr}, $(V, T, \theta)$ is a representation of $(\g, D)$. Moreover, by Proposition \ref{defirB}, the map $\Theta_\huaB$ defined by 
$$
\Theta_{\huaB}(a)u=(\Id_V+T)\Theta(\huaB(a))u-T\Big(\Theta(a\cdot_G\huaB(a))u\Big), \quad \forall a\in G, u\in V,
$$ 
is a representation of $G_\huaB$ on $V$. A direct computation shows that its differential at the identity satisfies 
$$(\Theta_\huaB)_{* e_G}(x)u=\theta(B(x))u-T(\theta(x)u), \quad \forall x\in\g, u\in V, $$
which is a representation of $\g_B$ on $V$.

With these representations in hand, we define $\widetilde{\mathrm{VE}}_{n}: C^{n}(G, \huaB, V, T)\lon C^{n}(\g, B, V, T)$ by
$$
\widetilde{\mathrm{VE}}_{n}(\alpha_n, \beta_{n-1})=(\mathrm{VE}_{n}(\alpha_n), \mathrm{VE}_{n-1}(\beta_{n-1})),\quad \forall (\alpha_n, \beta_{n-1})\in C^{n}(G, \huaB, V, T).
$$
Then we have the following theorem.

\begin{thm}\label{chainhomtopy}
The map $\widetilde{\mathrm{VE}}: \oplus_{n=1}^{+\infty}C^{n}(G, \huaB, V, T)\lon \oplus_{n=1}^{+\infty}C^{n}(\g, B, V, T)$ is a cochain map, which induces homomorphisms $\widetilde{\mathrm{VE}}_{n*}$ from the cohomology group $H^{n}(G, \huaB, V, T)$ to $H^{n}(\g, B, V, T)$. The map $\widetilde{\mathrm{VE}}$ is called the {\bf van Est map}.
\end{thm}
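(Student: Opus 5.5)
To show $\widetilde{\mathrm{VE}}$ is a cochain map we must check $\dM\circ\widetilde{\mathrm{VE}}_n=\widetilde{\mathrm{VE}}_{n+1}\circ\delta$. Comparing \eqref{defidgco} and \eqref{deficodlg} componentwise, this splits into three identities:
\begin{equation*}
\dM^{\theta}\mathrm{VE}_n(\alpha_n)=\mathrm{VE}_{n+1}(\dM^{\Theta}\alpha_n),\qquad \dM^{\theta_B}\mathrm{VE}_{n-1}(\beta_{n-1})=\mathrm{VE}_{n}(\dM^{\huaB}\beta_{n-1}),
\end{equation*}
\begin{equation*}
K_n(\mathrm{VE}_n(\alpha_n))=\mathrm{VE}_n(c_n(\alpha_n)).
\end{equation*}

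The first identity is the classical fact, recalled before Theorem \ref{van}, that the van Est map is a cochain map for $G$ with coefficients in $(V,\Theta)$. The second identity is the same classical fact applied to the Lie group $G_\huaB=(G,\star,e_G)$ with the representation $\Theta_\huaB$ of Proposition \ref{defirB}. Here the Lie algebra of $G_\huaB$ is $\g_B$, and the differential of $\Theta_\huaB$ is $\theta_B$, as computed just before the definition of $\widetilde{\mathrm{VE}}_n$.

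One point needs care. In the second identity, $\mathrm{VE}$ should be the van Est map of $G_\huaB$, built from the exponential map $\exp_\star$ of $G_\huaB$, whereas $\widetilde{\mathrm{VE}}_n$ is written with $\exp$ of $G$. I will show the two agree. The iterated mixed derivative at $0$ of a normalized cochain evaluated on curves $\gamma_i(t_i)$ depends only on the velocities $\gamma_i'(0)$. Indeed, normalization forces the cochain to vanish whenever any $t_i=0$. So the lowest-order term in $(t_1,\dots,t_n)$ is multilinear in first-order data, and second-order corrections of the curves contribute only terms divisible by some $t_i^2$, which the operator $\partial_{t_1}\cdots\partial_{t_n}|_0$ kills. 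Since $\exp(tx)$ and $\exp_\star(tx)$ have the same velocity $x$, the formula is independent of the choice. This is the same observation that makes the classical van Est map well defined.

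The main step is the third identity. Fix $x_1,\dots,x_n$ and put $a_i=\exp(t_ix_i)$ in \eqref{dec}. By the remark above, only first-order data of the curves matter. The curve $\huaB(\exp(tx))$ has velocity $B(x)$, so differentiating the term $(\Id_V+T)\alpha_n(\huaB(a_1),\dots,\huaB(a_n))$ and antisymmetrizing gives $(\Id_V+T)\mathrm{VE}_n(\alpha_n)(Bx_1,\dots,Bx_n)$.

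The curve $\exp(tx)\cdot_G\huaB(\exp(tx))$ has velocity $x+B(x)$. Differentiating $T\alpha_n(a_1\huaB(a_1),\dots)$ therefore gives $T\,\mathrm{VE}_n(\alpha_n)((\Id+B)x_1,\dots,(\Id+B)x_n)$. Expanding by multilinearity yields the sum over all subsets $\{i_1<\dots<i_k\}$ of $T\zeta_n$ with $B$ applied in the slots $i_1,\dots,i_k$, including the subset $k=n$. Combining this $k=n$ term $-T\zeta_n(Bx_1,\dots,Bx_n)$ with the $(\Id_V+T)\zeta_n(Bx_1,\dots,Bx_n)$ term leaves exactly $\zeta_n(Bx_1,\dots,Bx_n)$ minus the sum over $0\le k\le n-1$. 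With the common sign $(-1)^n$, this is precisely $K_n(\zeta_n)$ for $\zeta_n=\mathrm{VE}_n(\alpha_n)$.

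I expect the main obstacle to be making the velocity-only dependence rigorous, since it is used both for $\exp_\star$ and for the composite curves. I would prove it once as a lemma via Taylor expansion of smooth normalized cochains near $(e_G,\dots,e_G)$. With the three identities in hand, the induced maps $\widetilde{\mathrm{VE}}_{n*}$ on cohomology follow immediately.
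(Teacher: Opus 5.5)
Your proposal is correct and follows the paper's proof. Both arguments reduce the cochain-map property to the classical van Est identities for $(G,\Theta)$ and $(G_\huaB,\Theta_\huaB)$, together with the key identity $\mathrm{VE}_n(c_n(\alpha_n))=K_n(\mathrm{VE}_n(\alpha_n))$, which is checked by differentiating along curves with velocities $B(x_i)$ and $(\Id+B)(x_i)$ and expanding multilinearly. Your velocity-only lemma for normalized cochains justifies a step the paper uses without comment (replacing $\huaB(\exp(tx))$ by $\exp(tB(x))$), and it also settles the $\exp$ versus $\exp_\star$ ambiguity for the second component, which the paper glosses over.
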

\begin{proof}
	For $n\geq1, \alpha_n\in C^{n}(G, V),  \beta_{n-1}\in C^{n}(\huaB, T)$, following the classical argument used for the cohomology of Lie groups and Lie algebras, we have
	\begin{eqnarray*}
		\widetilde{\mathrm{VE}}_{n+1}(\dM^{\Theta}\alpha_n, c_n(\alpha_n)+\dM^\huaB\beta_{n-1})&=&(\mathrm{VE}_{n+1}(\dM^{\Theta}\alpha_n), \mathrm{VE}_{n}(c_n(\alpha_n))+\mathrm{VE}_{n}(\dM^\huaB\beta_{n-1}))\\
		&=&\Big(\dM^{\theta}(\mathrm{VE}_{n}(\alpha_n)), \mathrm{VE}_{n}(c_n(\alpha_n))+\dM^{\theta_{B}}(\mathrm{VE}_{n-1}(\beta_{n-1}))\Big).
	\end{eqnarray*}
On the other hand, by definition of $\dM$, we also have
\begin{equation*}
\dM(\widetilde{\mathrm{VE}}_{n}(\alpha_n, \beta_{n-1}))=(\dM^{\theta}(\mathrm{VE}_{n}(\alpha_n)), K_n(\mathrm{VE}_{n}(\alpha_n))+\dM^{\theta_B}(\mathrm{VE}_{n-1}(\beta_{n-1}))),
\end{equation*}
thus to prove that $\widetilde{\mathrm{VE}}$ is a cochain map, it suffices to prove  $\mathrm{VE}_n(c_n(\alpha_n))=K_n(\mathrm{VE}_n(\alpha_n))$.

For any $x_i\in\g$, $1\leq i\leq n$, we have
	\begin{align*}
		&\quad \mathrm{VE}_n(c_n(\alpha_n))(x_1, \cdots, x_n)\\
		&=\sum_{\epsilon\in S_{n}}(-1)^{|\epsilon|}\frac{d}{dt_{\epsilon(1)}}\cdots\frac{d}{dt_{\epsilon(n)}}\bigg|_{t_{\epsilon(1)}=\cdots=t_{\epsilon(n)}=0}(-1)^n\Big((\Id_V+T)\alpha(\huaB(\exp(t_{\epsilon(1)}x_{\epsilon(1)})), \cdots, \huaB(\exp(t_{\epsilon(n)}x_{\epsilon(n)})))\\
		&\quad-T(\alpha_n(\exp(t_{\epsilon(1)}x_{\epsilon(1)})\cdot_G\huaB(\exp(t_{\epsilon(1)}x_{\epsilon(1)})),\cdots, \exp(t_{\epsilon(n)}x_{\epsilon(n)})\cdot_G\huaB(\exp(t_{\epsilon(n)}x_{\epsilon(n)}))))\Big)\\
		&=(-1)^{n}\Big((\Id_V+T)\sum_{\epsilon\in S_{n}}(-1)^{|\epsilon|}\frac{d}{dt_{\epsilon(1)}}\cdots\frac{d}{dt_{\epsilon(n)}}\bigg|_{t_{\epsilon(1)}=\cdots=t_{\epsilon(n)}=0}\alpha_n\Big(\exp(t_{\epsilon(1)}B(x_{\epsilon(1)})), \cdots, \exp(t_{\epsilon(n)}B(x_{\epsilon(n)}))\Big)\\
		&\quad-T(\sum_{k=0}^{n}\sum_{1\leq i_1<\cdots<i_k\leq n}\sum_{\epsilon\in S_{n}}(-1)^{|\epsilon|}\frac{d}{dt_{\epsilon(1)}}\cdots\frac{d}{dt_{\epsilon(n)}}\bigg|_{t_{\epsilon(1)}=\cdots=t_{\epsilon(n)}=0}\\
		&\quad\alpha_n(\exp(t_{\epsilon(1)}x_{\epsilon(1)}),\cdots\exp(t_{\epsilon(i_1)}B(x_{\epsilon(i_1)})),\cdots,\exp(t_{\epsilon(i_n)}B(x_{\epsilon(i_n)})),\cdots, \exp(t_{\epsilon(n)}x_{\epsilon(n)})))\Big)\\
		&=(-1)^{n}\Big(\mathrm{VE}_n(\alpha_n)(B(x_1), \cdots, B(x_n))-T(\sum_{k=0}^{n-1}\sum_{1\leq i_1<\cdots<i_k\leq n}\mathrm{VE}_n(\alpha_n)(x_1, \cdots, B(x_{i_1}), \cdots, B(x_{i_n}), \cdots, x_n))\Big)\\
		&=K_n(\mathrm{VE}_n(\alpha_n))(x_1, \cdots, x_n),
	\end{align*}
	 which implies that $\widetilde{\mathrm{VE}}$ is a cochain map. Therefore $\widetilde{\mathrm{VE}}_{n*}$ are homomorphisms from the cohomology group $H^{n}(G, \huaB, V, T)$ to $H^{n}(\g, B, V, T)$.
\end{proof}

\begin{thm}\label{van1}
	Assume that $G$ is a connected Lie group and its homotopy groups are trivial in $1, \cdots, n$. Then for $1\leq i\leq n$, the cohomology group $H^{i}(G, \huaB, V, T)$ is isomorphic to the cohomology group $H^{i}(\g, B, V, T)$.
\end{thm}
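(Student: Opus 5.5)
The plan is to compare the two long exact sequences through the van Est map and apply the five lemma. By Theorem \ref{logco} and the corresponding result of \cite{DAS}, each side sits in a long exact sequence:
$$\cdots\to H^{i}(\huaB, T)\to H^{i}(G,\huaB,V,T)\to H^{i}(G,V)\to H^{i+1}(\huaB,T)\to\cdots$$
on the group side, and
$$\cdots\to H^{i}(B, T)\to H^{i}(\g,B,V,T)\to H^{i}(\g,V)\to H^{i+1}(B,T)\to\cdots$$
on the algebra side. The van Est map $\widetilde{\mathrm{VE}}$ of Theorem \ref{chainhomtopy} restricts to $\mathrm{VE}$ on the subcomplex $\oplus C^n(\huaB,T)$. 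Here $C^n(\huaB,T)=C^{n-1}(G_\huaB,V)$ with the representation $\Theta_\huaB$, and $(\Theta_\huaB)_{*e_G}=\theta_B$. It also induces $\mathrm{VE}$ on the quotient $\oplus C^n(G,V)$. The maps $\mathfrak i,\mathfrak p,\iota,p$ are the obvious inclusions and projections, and $\widetilde{\mathrm{VE}}$ is diagonal. Hence we get a morphism of short exact sequences of cochain complexes, and naturality of connecting homomorphisms gives a commutative ladder between the two long exact sequences. Concretely, $\mathrm{VE}_n\circ c_n=K_n\circ\mathrm{VE}_n$ was established in the proof of Theorem \ref{chainhomtopy}, so the squares involving $\frkk^n$ and $k^n$ commute.

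Next I apply Theorem \ref{van} to both groups. For $G$, with $\pi_j(G)=0$ for $1\le j\le n$, we get $H^j(G,V)\cong H^j(\g,V)$ for $j\le n$. For $G_\huaB$, the underlying manifold is the same as that of $G$, since $\star$ is just another group law on the same smooth manifold. So $G_\huaB$ is connected with the same homotopy groups, and its Lie algebra is $\g_B$. Theorem \ref{van} then gives $H^{j}(G_\huaB,V)\cong H^{j}(\g_B,V)$ for $j\le n$. In the shifted indexing this reads $H^{j}(\huaB,T)\cong H^{j}(B,T)$ for $1\le j\le n+1$ (with $j=1$ trivial, since both sides are $0$). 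I also need to check that the van Est map for $G_\huaB$ is built from the exponential map of $G_\huaB$, not of $G$. The formula $\mathrm{VE}_{n-1}(\beta_{n-1})$ in $\widetilde{\mathrm{VE}}$ must be read with $\exp_{G_\huaB}$ for the identification with the classical van Est map. Alternatively, I would note that any smooth curves with the same initial velocity give the same antisymmetrized derivatives at $0$ for normalized cochains. This is the point I would check most carefully, since it is where the two group structures interact.

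Finally, for $1\le i\le n$, I apply the five lemma to the segment
$$H^{i-1}(G,V)\to H^{i}(\huaB,T)\to H^{i}(G,\huaB,V,T)\to H^{i}(G,V)\to H^{i+1}(\huaB,T)$$
and its algebra counterpart. The two outer-left and two outer-right vertical maps are isomorphisms by the previous step. The indices used are $i-1,i\le n$ for $G$ and $i,i+1\le n+1$ for $G_\huaB$. For $i=1$, the left terms are $H^0$ or zero, and the statement reduces to the four-lemma variant, using that $H^1(\huaB,T)=0=H^1(B,T)$. Hence the middle map $\widetilde{\mathrm{VE}}_{i*}$ is an isomorphism.

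The main obstacle is the bookkeeping at the ends of the range. One needs the right index shift for the complex $C^\bullet(\huaB,T)$, and one must apply van Est to $G_\huaB$ up to degree $n$ under the same hypothesis on $\pi_j(G)$. The remaining work is the justification that $\mathrm{VE}$ on $C^\bullet(\huaB,T)$ agrees with the classical van Est map of $G_\huaB$.
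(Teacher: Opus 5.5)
Your proposal is correct and follows the paper's proof. Both arguments build a commutative ladder between the two long exact sequences, using $\mathrm{VE}_n\circ c_n=K_n\circ\mathrm{VE}_n$ from Theorem \ref{chainhomtopy}, apply the classical van Est theorem to $G$ and to $G_\huaB$, and conclude with the five lemma. You are more explicit than the paper on several points the paper leaves implicit: why van Est applies to $G_\huaB$ (it has the same underlying manifold and its Lie algebra is $\g_B$), why the van Est maps do not depend on which curves through $e_G$ with prescribed velocity are used, and how the endpoint $i=1$ is handled.
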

\begin{proof}
		For $[\alpha_{i-1}]\in H^{i-1}(G, V), [\beta_{i-1}]\in H^{i}(\huaB, T)$ and $[(\alpha_i, \beta'_{i-1})]\in H^{i}(G, \huaB, V, T)$, by Theorem \ref{chainhomtopy}, we have
		\begin{eqnarray*}
			k^{i-1}(\mathrm{VE}_{i-1*}([\alpha_{i-1}]))&=&[K_{i-1}(\mathrm{VE}_{i-1}(\alpha_{i-1}))]=\mathrm{VE}_{i-1*}(\frkk^{i-1}([\alpha_{i-1}])),\\
			\widetilde{\mathrm{VE}}_{i*}(\frak{i}_*([\beta_{i-1}]))&=&[(0, \mathrm{VE}_{i-1}(\beta_{i-1}))]=\iota_*(\mathrm{VE}_{i-1*}([\beta_{i-1}])),\\
			\mathrm{VE}_{i*}(\frak{p}_*([(\alpha_i, \beta'_{i-1})]))&=&[\mathrm{VE}_i(\alpha_{i})]=p_*(\widetilde{\mathrm{VE}}_{i*}([(\alpha_{i}, \beta'_{i-1})])),
		\end{eqnarray*}
		where $k^{i-1}$ and $\frkk^{i-1}$ are given by \eqref{tranco} and \eqref{connmap} respectively.  Thus we have the following commutative diagram:
		\[\begin{CD}
			H^{i-1}(G, V)@>\frkk^{i-1}>>H^{i}(\huaB, T)@>\frak{i}_*>>H^{i}(G, \huaB, V, T)@>\frak{p}_*>>H^{i}(G, V)            @>\frkk^{i}>>H^{i+1}(\huaB, T)\\
			@V \mathrm{VE}_{i-1*} VV    @V \mathrm{VE}_{i-1*} VV   @V\widetilde{\mathrm{VE}}_{i*}VV  @V \mathrm{VE}_{i*} VV    @V \mathrm{VE}_{i+1*} VV\\
			H^{i-1}(\g, V)@>k^{i-1}>>H^{i}(B, T) @>\iota_*>>H^{i}(\g, B, V, T)@>p_*>>H^{i}(\g, V)             @>k^{i}>>H^{i+1}(B, T)
			.
		\end{CD}\]
		Since $G$ is connected and its homotopy groups are trivial in $1, \cdots, n$, for $1\leq i\leq n$, by Theorem \ref{van}, we have the following group isomorphism $H^{i}(\huaB, T)\simeq H^{i}(B, T), H^{i+1}(\huaB, T)\simeq H^{i+1}(B, T)$ and $H^{i-1}(G, V)\simeq H^{i-1}(\g, V), H^{i}(G, V)\simeq H^{i}(\g, V)$. Apply the Five Lemma to the above diagram, we have
		$$H^{i}(G, \huaB, V, T)\simeq H^{i}(\g, B, V, T), \quad \forall 1\leq i\leq n.$$
		Thus, for $1\leq i\leq n$, the cohomology group $H^{i}(G, \huaB, V, T)$ is isomorphic to the cohomology group $H^{i}(\g, B, V, T)$.
\end{proof}

\section{Central extensions of Rota-Baxter Lie groups}\label{secch2}
In this section, we apply the cohomology theory developed in Section \ref{seccoh} to classify central extensions of Rota-Baxter Lie groups, showing that they are classified by the second cohomology group.

\begin{defi}{\rm(\cite{BRS})}
	Let $(G, \huaB)$ be a Rota-Baxter Lie group and $V$ a vector space, $T:V\to V$ a linear map. A {\bf central extension} of $(G, \huaB)$ by $(V, T)$ is given by a short exact sequence of Rota-Baxter Lie group homomorphisms:
	\begin{equation*}
		{0} \longrightarrow (V, T) \stackrel{i}{\hookrightarrow} (H, \huaB_H) \stackrel{p}{\twoheadrightarrow} (G, \huaB) \longrightarrow {0},
	\end{equation*}
such that $i(V)$ is contained in the center $Z(H)$ of the Lie group $(H, \cdot_H, e_H)$, where $(H, \huaB_H)$ is a Rota-Baxter Lie group.
\end{defi}

\begin{defi}{\rm(\cite{BRS})}
	Let $(H, \huaB_{H})$ and $(H', \huaB_{H'})$ be two central extensions of a Rota-Baxter Lie group $(G, \huaB)$ by $(V, T)$. They are said to be {\bf isomorphic} if there exists an isomorphism $\sigma: H\to H'$ of Rota-Baxter Lie groups such that the following diagram commutes:
	\begin{equation*}
		\xymatrix{
			{0} \ar[r] & (V, T) \ar[d]_{\Id_V} \ar@{^{(}->}[r]^{i} & (H, \huaB_H) \ar[d]_{\sigma} \ar@{->>}[r]^{p} & (G, \huaB) \ar[d]_{\Id_G} \ar[r] & {0} \\
			{0} \ar[r] & (V, T) \ar@{^{(}->}[r]^{i'} & (H', \huaB_{H'}) \ar@{->>}[r]^{p'} & (G, \huaB) \ar[r] & {0}.
		}
	\end{equation*}
	Denote by $\mathrm{CExt(G, \huaB, V, T)}$ the set of equivalence classes of central extensions of the Rota-Baxter Lie group $(G, \huaB)$ by $(V, T)$.
	\end{defi}
	
	\begin{defi}
		A {\bf section} of a central extension $(H, \huaB_H)$ of a Rota-Baxter Lie group $(G, \huaB)$ by $(V, T)$ is a smooth map $s: G\lon H$ such that
		\begin{equation*}
			p\circ s=\Id_G, \quad s(e_G)=e_H.
		\end{equation*}
	\end{defi}

Let $(H, \huaB_H)$ be a central extension of $(G, \huaB)$ by $(V, T)$. We identify $V$ with $i(V)$. Assume that $s:G\to H$ is a section. Define a smooth map $\tau: G\times V\to H$ by
\begin{equation*}
	\tau(a, u)=u\cdot_H s(a), \quad \forall a\in G, u\in V.
\end{equation*}
The inverse of $\tau$ is given by
\begin{equation*}
	\tau^{-1}(h)=\Big(p(h), h\cdot_H(s(p(h)))^{-1}\Big), \quad \forall h\in H.
\end{equation*}
Using $\tau$, we can transport the Rota--Baxter Lie group structure on $H$ to $G \times V$: the multiplication is
\begin{equation*}
	(a, u)\cdot(b, v)=\tau^{-1}\Big(\tau(a, u)\cdot_H\tau(b, v)\Big)=(a\cdot_G b, u+v+\alpha(a, b)),\quad\forall a, b\in G, u,v\in V,
\end{equation*}
and the Rota--Baxter operator is
\begin{equation}\label{defibbb}
	\mathbb{B}(a, u)=\tau^{-1}\Big(\huaB_H(\tau(a, u))\Big)=\Big(\huaB(a), \huaB_H(u\cdot_H s(a))\cdot_H(s(\huaB(a)))^{-1}\Big), \quad\forall a\in G, u\in V,
\end{equation}
where 
\begin{equation*}
\alpha(a, b)=s(a)\cdot_H s(b)\cdot_H (s(a\cdot_G b))^{-1}.
\end{equation*}
Consequently, $(G\times V, \mathbb{B})\cong (H, \huaB_H)$. We now give the explicit formula for the Rota–Baxter operator $\mathbb{B}$ as follows.

\begin{pro}
	If $s:G\to H$ is a section, then for any $a\in G, u\in V$,
	\begin{equation*}
		\mathbb{B}(a, u)=\Big(\huaB(a), T(u)+\beta(a)\Big),
	\end{equation*} 
	where  $\beta(a)=\huaB_H(s(a))\cdot_H \Big(s(\huaB(a))\Big)^{-1}$.
\end{pro}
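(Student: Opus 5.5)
Starting from \eqref{defibbb}, the second component of $\mathbb{B}(a,u)$ is $\huaB_H(u\cdot_H s(a))\cdot_H(s(\huaB(a)))^{-1}$. So it suffices to show
\[
\huaB_H(u\cdot_H s(a))=T(u)\cdot_H \huaB_H(s(a)),
\]
where $V$ is identified with $i(V)\subset Z(H)$. Multiplying this identity on the right by $s(\huaB(a))^{-1}$ gives $T(u)+\beta(a)$, written additively in $V$. This uses that $\huaB_H(s(a))\cdot_H s(\huaB(a))^{-1}$ lies in $V$, which holds because $p$ intertwines $\huaB_H$ and $\huaB$ and $p\circ s=\Id_G$: applying $p$ gives $\huaB(a)\cdot_G\huaB(a)^{-1}=e_G$.

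To prove the key identity, I apply the Rota--Baxter relation \eqref{defiB} on $H$ with $g=u$ and $h=\Ad_{\huaB_H(u)^{-1}}s(a)$. This gives
\[
\huaB_H(u)\cdot_H\huaB_H\big(\Ad_{\huaB_H(u)^{-1}}s(a)\big)=\huaB_H\big(u\cdot_H s(a)\big).
\]
Since $i$ is a homomorphism of Rota--Baxter Lie groups, $\huaB_H(u)=\huaB_H(i(u))=i(T(u))=T(u)\in V\subset Z(H)$. Hence $\Ad_{\huaB_H(u)^{-1}}$ acts trivially and $h=s(a)$. Substituting yields $\huaB_H(u\cdot_H s(a))=T(u)\cdot_H\huaB_H(s(a))$, as required.

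The only point needing care is the justification that $\huaB_H(u)$ is central. This comes from $\huaB_H\circ i=i\circ T$ (the Rota--Baxter operator on $V$ viewed as an abelian Lie group is $T$) together with $i(V)\subset Z(H)$. After that, the group law in $V\cong i(V)$ is addition, so products in $H$ of elements of $V$ are written as sums, and the formula $\mathbb{B}(a,u)=(\huaB(a),T(u)+\beta(a))$ follows directly.
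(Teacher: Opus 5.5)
Your proposal is correct and is essentially the paper's argument: both apply the Rota--Baxter identity with the first argument taken in $V$ and use that $\huaB_H(v)=T(v)$ is central, so the conjugation drops out and one gets $\huaB_H(u\cdot_H s(a))=T(u)\cdot_H\huaB_H(s(a))$. The only difference is that the paper works in the transported model $G\times V$, comparing $\mathbb{B}(e_G,v)\cdot\mathbb{B}(a,u)$ with $\mathbb{B}(a,u+v)$ and then setting $u=0$, whereas you work directly in $H$ and also make explicit why $\beta(a)$ lies in $V$.
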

\begin{proof}
By \eqref{defibbb}, we have
\begin{equation*}
\mathbb{B}(a, u)=\tau^{-1}\Big(\huaB_H(\tau(a, u))\Big)=\Big(\huaB(a), \huaB_H(u\cdot_H s(a))\cdot_H(s(\huaB(a)))^{-1}\Big), \quad\forall a\in G, u\in V.
\end{equation*}
Let $R_a(u)=\huaB_H(u\cdot_H s(a))\cdot_H(s(\huaB(a)))^{-1}$. On the one hand, for any $u, v\in V$ and $a\in G$, 
\begin{align*}
	\mathbb{B}(e_G, v)\cdot\mathbb{B}(a, u)&=(\huaB(a), T(v)+R_a(u)).
\end{align*}
On the other hand, 
\begin{align*}
\mathbb{B}\Big((e_G, v)\cdot(e_G, T(v))\cdot(a, u)\cdot(e_G, -T(v))\Big)=\Big(\huaB(a), R_a(v+u)\Big).
\end{align*}
Comparing the second components of the two expressions above yields
\begin{equation*}
	R_a(v)=T(v)+\huaB_H(s(a))\cdot_H \Big(s(\huaB(a))\Big)^{-1}.
\end{equation*}
Thus,
$
	\mathbb{B}(a, u)=\Big(\huaB(a), T(u)+\beta(a)\Big),
$
where  $\beta(a)=\huaB_H(s(a))\cdot_H \Big(s(\huaB(a))\Big)^{-1}$.
\end{proof}

\begin{thm}\label{thmcon123}
	If $s:G\to H$ is a section, then the pair $(\alpha, \gamma)$ defined by
	\begin{equation*}
		 \alpha(a, b)=s(a)\cdot_H s(b)\cdot_H (s(a\cdot_G b))^{-1},\quad	\gamma(a)=\huaB_H(s(a))\cdot_H \Big(s(\huaB(a))\Big)^{-1}-T\Big(\alpha(a, \huaB(a))\Big),
	\end{equation*}
for any $a, b\in G$, is a $2$-cocycle of the Rota-Baxter Lie group $(G, \huaB)$ with coefficients in the representation $(V, T, \Id_V)$.

Moreover, its cohomological class 
	does not depend on the choice of sections.
\end{thm}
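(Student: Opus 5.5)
The plan is to verify the cocycle condition directly in the transported model $(G\times V,\cdot,\mathbb{B})$, and then to compare two sections by a $1$-cochain. Take $\Theta=\Id_V$ to mean the trivial action. Then \eqref{repB} gives $\Theta_\huaB(a)u=(\Id_V+T)u-T(u)=u$, so $\Theta_\huaB$ is also trivial.

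\emph{The cocycle equations.} For $(\alpha,\gamma)\in C^2(G,V)\oplus C^2(\huaB,T)$, the definition \eqref{defidgco} of $\delta$ says that $\delta(\alpha,\gamma)=0$ means $\dM^{\Theta}\alpha=0$ and $\dM^{\huaB}\gamma+c_2(\alpha)=0$. The second condition reads
\begin{equation*}
\gamma(a\star b)=\gamma(a)+\gamma(b)+(\Id_V+T)\alpha(\huaB(a),\huaB(b))-T\big(\alpha(a\cdot_G\huaB(a),\, b\cdot_G\huaB(b))\big).
\end{equation*}
Both $\alpha$ and $\gamma$ are smooth and normalized, since $s(e_G)=e_H$ and $\huaB_H(e_H)=e_H$. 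The condition $\dM^\Theta\alpha=0$ is the usual $2$-cocycle identity, $\alpha(b,c)-\alpha(ab,c)+\alpha(a,bc)-\alpha(a,b)=0$. It follows from associativity of $(a,u)\cdot(b,v)=(ab,u+v+\alpha(a,b))$, because $V$ is central.

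\emph{The identity for $\gamma$.} Write $\beta=\gamma+T\alpha(\cdot,\huaB(\cdot))$, so that $\mathbb{B}(a,u)=(\huaB(a),T(u)+\beta(a))$ by the preceding Proposition. I will expand the Rota--Baxter identity \eqref{defiB} for $\mathbb{B}$ with $(a,u)$ and $(b,v)$.
\begin{itemize}
\item The left side is $\big(\huaB(a)\huaB(b),\,T(u)+T(v)+\beta(a)+\beta(b)+\alpha(\huaB(a),\huaB(b))\big)$.
\item On the right side, I compute $(a,u)\cdot\mathbb{B}(a,u)\cdot(b,v)\cdot\mathbb{B}(a,u)^{-1}$. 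Here $(g,w)^{-1}=(g^{-1},-w-\alpha(g,g^{-1}))$ by normalization. The first component is $a\star b$. The second component is $u+v$ plus the $\alpha$-terms
\begin{equation*}
\alpha(a,\huaB(a))+\alpha(a\huaB(a),b)+\alpha(a\huaB(a)b,\huaB(a)^{-1})-\alpha(\huaB(a),\huaB(a)^{-1}),
\end{equation*}
in which the $T(u)+\beta(a)$ contributions cancel. Applying $\mathbb{B}$ gives $T$ of this second component plus $\beta(a\star b)$.
\item The $u,v$-terms agree on both sides.
\end{itemize}
So the Rota--Baxter property yields one identity relating $\beta(a\star b)$, $\beta(a)$, $\beta(b)$ and $T$ applied to four $\alpha$-terms. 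Substituting $\beta=\gamma+T\alpha(\cdot,\huaB(\cdot))$ reduces the claim to an identity inside $T(\cdots)$:
\begin{equation*}
\alpha(a\star b,\huaB(a)\huaB(b))-\alpha(b,\huaB(b))+\alpha(a\huaB(a),b\huaB(b))=\alpha(a\huaB(a),b)+\alpha(a\huaB(a)b,\huaB(a)^{-1})-\alpha(\huaB(a),\huaB(a)^{-1}),
\end{equation*}
where I used $a\star b\cdot_G\huaB(a\star b)=a\huaB(a)b\huaB(b)$.

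I expect this last step to be the main obstacle. It must follow from repeated use of the $2$-cocycle identity for $\alpha$, applied to the triples $(a\huaB(a)b,\huaB(a)^{-1},\huaB(a)\huaB(b))$, $(\huaB(a),\huaB(a)^{-1},\huaB(a)\huaB(b))$ and $(a\huaB(a),b,\huaB(b))$. I would first try to telescope along the factorization $a\huaB(a)b\huaB(b)=(a\huaB(a)b\huaB(a)^{-1})\cdot(\huaB(a)\huaB(b))$, and I would check the bookkeeping carefully.

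\emph{Independence of the section.} Any other section has the form $s'(a)=\varphi(a)\cdot_H s(a)$ with $\varphi\in C^1(G,V)$ normalized. Then $\alpha'=\alpha+\dM^\Theta\varphi$. The formula $R_a(v)=T(v)+\beta(a)$ gives $\huaB_H(\varphi(a)s(a))=T\varphi(a)+\huaB_H(s(a))$, hence $\beta'(a)=\beta(a)+T\varphi(a)-\varphi(\huaB(a))$. Therefore
\begin{equation*}
\gamma'(a)=\gamma(a)+T\varphi(a)-\varphi(\huaB(a))-T\big(\varphi(\huaB(a))-\varphi(a\huaB(a))+\varphi(a)\big)=\gamma(a)+c_1(\varphi)(a).
\end{equation*}
This says exactly that $(\alpha',\gamma')=(\alpha,\gamma)+\delta\varphi$, so the two cocycles are cohomologous.
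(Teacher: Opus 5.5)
Your route is the paper's: expand the Rota--Baxter identity for $\mathbb{B}(a,u)=(\huaB(a),T(u)+\beta(a))$, substitute $\beta=\gamma+T\alpha(\cdot,\huaB(\cdot))$, and reduce to cocycle identities for $\alpha$. Your section-independence computation, $(\alpha',\gamma')=(\alpha,\gamma)+\delta\varphi$, is correct and matches the paper's.

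\emph{The gap.} The identity you reduce the $\gamma$-condition to is miscomputed. As stated it is false, so it cannot follow from the cocycle identity. For example, take $G=(\mathbb{R},+)$, $\huaB(a)=-a$ and the normalized cocycle $\alpha(g,h)=gh$; then the two sides of your identity differ by $-2a^2-3ab$. Write $P=\huaB(a)$, $Q=\huaB(b)$, $x=a\huaB(a)$, with products in $G$ by juxtaposition, so $a\star b=xbP^{-1}$. The Rota--Baxter relation, after your substitution, gives
\[
\gamma(a\star b)=\gamma(a)+\gamma(b)+\alpha(P,Q)+T\big(\alpha(b,Q)-\alpha(x,b)+\alpha(P,P^{-1})-\alpha(xb,P^{-1})-\alpha(a\star b,PQ)\big).
\]
The target has $T$-part $T\big(\alpha(P,Q)-\alpha(x,bQ)\big)$. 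So the identity actually needed is
\[
\alpha(a\star b,PQ)+\alpha(P,Q)-\alpha(b,Q)-\alpha(x,bQ)=\alpha(P,P^{-1})-\alpha(x,b)-\alpha(xb,P^{-1}).
\]
Your version makes three errors:
\begin{enumerate}
\item it drops the $T\alpha(P,Q)$ coming from the $(\Id_V+T)$ in $c_2(\alpha)$;
\item it reverses the sign of $\alpha(x,bQ)$;
\item it reverses the sign of the whole right-hand side.
\end{enumerate}
Since you also left this verification undone, the crux of the cocycle claim is not yet established.

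\emph{The fix.} Once the target is corrected, the three triples you name do close the argument. The cocycle identity at $(xb,P^{-1},PQ)$ gives
\[
\alpha(a\star b,PQ)=\alpha(P^{-1},PQ)+\alpha(xb,Q)-\alpha(xb,P^{-1}).
\]
Substituting this, the needed identity becomes
\[
\big[\alpha(P^{-1},PQ)+\alpha(P,Q)-\alpha(P,P^{-1})\big]-\big[\alpha(b,Q)-\alpha(xb,Q)+\alpha(x,bQ)-\alpha(x,b)\big]=0 .
\]
Both brackets vanish: they are the cocycle identities at $(P,P^{-1},PQ)$ (using normalization) and at $(x,b,Q)$.

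The paper uses a slightly different set of triples. It first applies $(a\star b,P,P^{-1})$ to replace $\alpha(xb,P^{-1})-\alpha(P,P^{-1})$ by $-\alpha(a\star b,P)$. It then finishes with $(a\star b,P,Q)$ and $(x,b,Q)$. Either choice works.
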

\begin{proof}
By the fact that $(G\times V, \cdot)$ is a Lie group , where the group multiplication $\cdot$ is given by
\begin{equation*}
(a, u)\cdot (b, v)=(a\cdot_G b, u+v+\alpha(a, b)), \quad \forall a, b\in G, u, v\in V.
\end{equation*}
We deduce that $\alpha$ is a $2$-cocycle of $(G, \cdot_G)$ with coefficients in $(V, \Id_V)$.

Moreover, since $\mathbb{B}(a, u)=\Big(\huaB(a), T(u)+\beta(a)\Big)$ is a Rota-Baxter operator, we obtain on the one hand,
\begin{align*}
\mathbb{B}(a, u)\cdot\mathbb{B}(b, v)=\Big(\huaB(a)\cdot_G\huaB(b), T(u)+T(v)+\beta(a)+\beta(b)+\alpha(\huaB(a), \huaB(b))\Big).
\end{align*}
On the other hand,
\begin{align*}
&\quad\mathbb{B}\Big((a, u)\cdot \mathbb{B}(a, u)\cdot (b, v)\cdot\mathbb{B}(a, u)^{-1}\Big)\\
&=\Big(\huaB(a)\cdot_G\huaB(b), T(u)+T(v)+T\Big(\alpha(a, \huaB(a))\Big)+T\Big(\alpha(a\cdot_G\huaB(a), b)\Big)\\
&\quad-T\Big(\alpha(\huaB(a), \huaB(a)^{-1})\Big)+T\Big(\alpha(a\cdot_G\huaB(a)\cdot_G b, \huaB(a)^{-1})\Big)+\beta(a\cdot_G\huaB(a)\cdot_G b\cdot_G\huaB(a)^{-1})\Big).
\end{align*}
Since $\dM^{\Id_V}\alpha(a\cdot_G\huaB(a)\cdot_G b\cdot_G\huaB(a)^{-1}, \huaB(a), \huaB(a)^{-1})=0$, we have 
\begin{equation*}
\alpha(\huaB(a), \huaB(a)^{-1})-\alpha(a\cdot_G\huaB(a)\cdot_G b, \huaB(a)^{-1})-\alpha(a\cdot_G\huaB(a)\cdot_G b\cdot_G\huaB(a)^{-1}, \huaB(a))=0.
\end{equation*}
Therefore,
\begin{align}
\label{defieqga}&\quad\beta(a)+\beta(b)+\alpha(\huaB(a), \huaB(b))-\beta(a\cdot_G\huaB(a)\cdot_G b\cdot_G\huaB(a)^{-1})\\
\nonumber&=T\Big(\alpha(a, \huaB(a))+\alpha(a\cdot_G\huaB(a), b)-\alpha(a\cdot_G\huaB(a)\cdot_G b\cdot_G\huaB(a)^{-1}, \huaB(a))\Big).	
\end{align}
By \eqref{repB}, the associated action $\Theta_B: G\to\GL(V)$ is given by
$
\Theta_B(a)u=u.	
$
 Since $\gamma(a)=\beta(a)-T\Big(\alpha(a, \huaB(a))\Big),$ by \eqref{defieqga} we have 
\begin{align*}
&\quad\dM^{\huaB}(\gamma)+c_2(\alpha)\\
&=\beta(b)-\beta(a\cdot_G\huaB(a)\cdot_G b\cdot_G\huaB(a)^{-1})+\beta(a)-T\Big(\alpha(a, \huaB(a))\Big)-T\Big(\alpha(b, \huaB(b))\Big)\\
&\quad+T\Big(\alpha(a\cdot_G\huaB(a)\cdot_G b\cdot_G\huaB(a)^{-1}, \huaB(a)\cdot_G\huaB(b))\Big)+\alpha(\huaB(a), \huaB(b))+T(\alpha(\huaB(a), \huaB(b)))\\
&\quad-T(\alpha(a\cdot_G\huaB(a), b\cdot_G\huaB(b)))\\
&=T\Big(\alpha(a\cdot_G\huaB(a), b)-\alpha(a\cdot_G\huaB(a)\cdot_G b\cdot_G\huaB(a)^{-1}, \huaB(a))-\alpha(b, \huaB(b))\\
&\quad+\alpha(a\cdot_G\huaB(a)\cdot_G b\cdot_G\huaB(a)^{-1}, \huaB(a)\cdot_G\huaB(b))+\alpha(\huaB(a), \huaB(b))-\alpha(a\cdot_G\huaB(a), b\cdot_G\huaB(b))\Big).
\end{align*}
Using the identities $\dM^{\Id_V}\alpha(a\cdot_G\huaB(a)\cdot_G b\cdot_G\huaB(a)^{-1}, \huaB(a), \huaB(b))=0$ and $\dM^{\Id_V}\alpha(a\cdot_G\huaB(a), b, \huaB(b))=0$, we obtain  
\begin{equation*}
	\alpha(a\cdot_G\huaB(a)\cdot_G b\cdot_G\huaB(a)^{-1}, \huaB(a))+\alpha(a\cdot_G\huaB(a)\cdot_G b, \huaB(b))=\alpha(\huaB(a), \huaB(b))+\alpha(a\cdot_G\huaB(a)\cdot_G b\cdot_G\huaB(a)^{-1}, \huaB(a)\cdot_G\huaB(b)),
\end{equation*}
and
\begin{equation*}
	\alpha(a\cdot_G\huaB(a), b)+\alpha(a\cdot_G\huaB(a)\cdot_G b, \huaB(b))=\alpha(b, \huaB(b))+\alpha(a\cdot_G\huaB(a), b\cdot_G\huaB(b)),
\end{equation*}
which imply that $(\alpha, \gamma)$ is a $2$-cocycle of the Rota-Baxter Lie group $(G, \huaB)$ with coefficients in the representation $(V, T, \Id_V)$.

If $s':G\to H$ is another section, then there exists a smooth map $\lambda: G\to V$ such that $s'(a)=\lambda(a)\cdot_H s(a)$ for any $a\in G$. Define
\begin{equation*}
\alpha'(a, b)=s'(a)\cdot_G s'(b)\cdot_G (s'(a\cdot_G b))^{-1},\quad \gamma'(a)=\huaB_H(s'(a))\cdot_H\Big(s'(\huaB(a))\Big)^{-1}-T\Big(\alpha'(a, \huaB(a))\Big),
\end{equation*}
for any $a, b\in G$. By direct calculation, we obtain
\begin{equation*}
	\alpha'(a, b)-\alpha(a, b)=\lambda(b)-\lambda(a\cdot_G b)+\lambda(a)=\dM^{\Id_V}\lambda(a, b), \quad \forall a, b\in G.
\end{equation*}
Since $\lambda(a)\in Z(H)$, then $\huaB_H(\lambda(a)\cdot_H s(a))=T(\lambda(a))\cdot_H\huaB_H(s(a))$, which implies
\begin{equation*}
	\gamma'(a)-\gamma(a)=T\Big(\lambda(a\cdot_G\huaB(a))-\lambda(\huaB(a))\Big)-\lambda(\huaB(a)).
\end{equation*}
Therefore, $(\alpha', \gamma')-(\alpha, \gamma)=\dM\lambda$, which shows that cohomological class is independent of the choice of sections.
\end{proof}

Finally, we use the second cohomology groups to classify central extensions of Rota-Baxter Lie groups. Let $V$ be a vector space and $T:V\to V$ a linear map. Consider the representation $(V, T, \Id_V)$ of the Rota-Baxter Lie group $(G, \huaB)$.
\begin{thm}\label{cenH2}
The central extensions of $(G, \huaB)$ by $(V, T)$ are classified by $H^2(G, \huaB, V, T)$, where $H^2(G, \huaB, V, T)$ is the second cohomology group associated with the representation $(V, T, \Id_V)$. More precisely, there is a bijection
\begin{equation*}
\mathrm{CExt}(G,\huaB,V,T)\;\xrightarrow\;H^2(G,\huaB,V,T).
\end{equation*}
\end{thm}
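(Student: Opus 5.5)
We construct the map $\Phi:\mathrm{CExt}(G,\huaB,V,T)\to H^2(G,\huaB,V,T)$ by choosing a section $s$ of a central extension $(H,\huaB_H)$ and sending its class to $[(\alpha,\gamma)]$, where $(\alpha,\gamma)$ is the $2$-cocycle of Theorem \ref{thmcon123}. That theorem already shows that this is a cocycle and that its class does not depend on $s$. The plan has three steps: check that $\Phi$ is well defined on isomorphism classes, construct an inverse map from cocycles to extensions, and check that the two maps are mutually inverse.

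\textbf{Well-definedness.} Suppose $\sigma:(H,\huaB_H)\to(H',\huaB_{H'})$ is an isomorphism of central extensions and $s$ is a section of $H$. Then $s'=\sigma\circ s$ is a section of $H'$, because $p'\circ\sigma=p$. Since $\sigma$ restricts to the identity on $V$ and intertwines both the products and the Rota-Baxter operators, we get $\alpha'=\alpha$ and
$$\huaB_{H'}(s'(a))\cdot_{H'}s'(\huaB(a))^{-1}=\sigma\big(\huaB_H(s(a))\cdot_H s(\huaB(a))^{-1}\big)=\beta(a).$$
It follows that $\gamma'=\gamma$, so $\Phi$ only depends on the isomorphism class.

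\textbf{Inverse map.} Given a normalized $2$-cocycle $(\alpha,\gamma)$, we have $\dM^{\Id_V}\alpha=0$ and $\dM^{\huaB}\gamma+c_2(\alpha)=0$. On $G\times V$ we put the product $(a,u)\cdot(b,v)=(ab,u+v+\alpha(a,b))$. The cocycle condition on $\alpha$ together with normalization makes this a Lie group, with inverse $(a,u)^{-1}=(a^{-1},-u-\alpha(a,a^{-1}))$ and with $V=\{e_G\}\times V$ central. Next set $\beta(a)=\gamma(a)+T(\alpha(a,\huaB(a)))$ and define
$$\mathbb{B}(a,u)=\big(\huaB(a),T(u)+\beta(a)\big).$$
The computation in the proof of Theorem \ref{thmcon123} can be read backwards. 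The Rota-Baxter identity for $\mathbb{B}$ is equivalent to identity \eqref{defieqga}. Using the three instances of $\dM^{\Id_V}\alpha=0$ listed there, \eqref{defieqga} is in turn equivalent to $\dM^{\huaB}\gamma+c_2(\alpha)=0$. Hence $(G\times V,\mathbb{B})$ is a Rota-Baxter Lie group. The inclusion $u\mapsto(e_G,u)$ and the projection onto $G$ are Rota-Baxter homomorphisms: this uses $\beta(e_G)=0$, which follows from normalization and $\huaB(e_G)=e_G$. So we obtain a central extension.

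\textbf{Cohomologous cocycles give isomorphic extensions.} Suppose $(\alpha',\gamma')=(\alpha,\gamma)+\delta\lambda$ with $\lambda\in C^1(G,V)$. We expect the map $\sigma(a,u)=(a,u-\lambda(a))$, possibly with the opposite sign depending on the convention fixed by the formula for $\gamma'-\gamma$ in Theorem \ref{thmcon123}, to be an isomorphism of extensions. Checking that $\sigma$ intertwines the two products and the two operators $\mathbb{B}$ is the reverse of the section-change computation in that proof.

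\textbf{Mutual inverses.} Starting from a cocycle and using the section $s(a)=(a,0)$ on the constructed extension, we recover exactly $\alpha$ and $\beta$, hence $\gamma$. Starting from an extension, the map $\tau$ gives the isomorphism $(G\times V,\mathbb{B})\cong(H,\huaB_H)$ shown above, and it is compatible with the inclusions and projections. Together these give the bijection.

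The main obstacle is the Rota-Baxter identity for the constructed $\mathbb{B}$. I would expand $\mathbb{B}\big((a,u)\cdot\mathbb{B}(a,u)\cdot(b,v)\cdot\mathbb{B}(a,u)^{-1}\big)$ carefully, including the $\alpha$-correction terms coming from the inverse $\mathbb{B}(a,u)^{-1}$, and reduce it to \eqref{defieqga} using the cocycle identities. This is the same as running the earlier computation backwards, but each implication must be checked to be reversible.
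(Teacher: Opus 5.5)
Your proposal is correct and follows the paper's proof essentially step for step: the map defined by choosing a section, invariance under isomorphisms via $s'=\sigma\circ s$, the inverse construction on $G\times V$ with $\mathbb{B}(a,u)=\big(\huaB(a),T(u)+\gamma(a)+T(\alpha(a,\huaB(a)))\big)$, the gauge map $\sigma$ for cohomologous cocycles, and the two compositions checked via the section $s(a)=(a,0)$ and the map $\tau$. Your reduction of the Rota--Baxter identity for $\mathbb{B}$ to \eqref{defieqga} and then to the cocycle condition is indeed reversible, which the paper only asserts. With your convention $(\alpha',\gamma')=(\alpha,\gamma)+\delta\lambda$, the sign $\sigma(a,u)=(a,u-\lambda(a))$ is the correct one.
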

\begin{proof}
Let $(H, \huaB_H)$ be a central extension of $(G, \huaB)$ by $(V, T)$. Let $s:G\to H$ be a section. By Theorem \ref{thmcon123}, the pair $(\alpha,\gamma)$ defined by
\begin{align*}
	\alpha(a,b)&=s(a)\cdot_H s(b)\cdot_H s(a\cdot_G b)^{-1},\\
	\gamma(a)&=\huaB_H(s(a))\cdot_H s(\huaB(a))^{-1}-T\big(\alpha(a,\huaB(a))\big)
\end{align*}
is a $2$-cocycle, and its cohomology class is independent of the choice of $s$. 

	Let $\sigma:H\to H'$ be an equivalence of extensions. Then $s'=\sigma\circ s$ is a section.
Since $\sigma\circ\huaB_H=\huaB_{H'}\circ\sigma$, for all $a,b\in G$,
\begin{align*}
	\alpha'(a,b)
	&=\sigma\big(s(a)\cdot_H s(b)\cdot_H s(a\cdot_G b)^{-1}\big)=\alpha(a,b),\\
	\gamma'(a)
	&=\sigma\big(\huaB_H(s(a))\cdot_H s(\huaB(a))^{-1}\big)
	-T\big(\alpha'(a,\huaB(a))\big)=\gamma(a).
\end{align*}
Hence the class $[(\alpha,\gamma)]$ depends only on the equivalence class of the extension, and
\begin{equation*}
\zeta:\mathrm{CExt}(G,\huaB,V,T)\longrightarrow H^2(G,\huaB,V,T),\qquad
\zeta\big([(H,\huaB_H)]\big)=[(\alpha,\gamma)],
\end{equation*}
is well defined.

Let $(\alpha,\gamma)$ be a $2$-cocycle. Define a multiplication on $G\times V$ by
\begin{equation*}
(a,u)\cdot(b,v)=\big(a\cdot_G b,\;u+v+\alpha(a,b)\big),\qquad a,b\in G,\ u,v\in V,
\end{equation*}
and a map $\mathbb B:G\times V\to G\times V$ by
\begin{equation*}
\mathbb B(a,u)=\Big(\huaB(a),\;T(u)+\gamma(a)+T\big(\alpha(a,\huaB(a))\big)\Big).
\end{equation*}
The cocycle conditions imply that $(G\times V,\cdot)$ is a group and that $\mathbb B$ satisfies the
Rota-Baxter identity. Moreover, $(G\times V,\mathbb B)$ is a central
extension of $(G,\huaB)$ by $(V,T)$.

Suppose $[(\alpha',\gamma')]=[(\alpha,\gamma)]$. Then there exists $\lambda:G\to V$ such that
	\begin{equation*}
	\lambda(e_G)=0, \quad 	\dM\lambda=(\alpha,\gamma)-(\alpha',\gamma')
	\end{equation*} 
Let $\cdot'$ and $\mathbb B'$ be the group structure and Rota-Baxter operator on $G\times V$ determined by $(\alpha',\gamma')$,
	and define
	\begin{equation*}
	\sigma:G\times V\to G\times V,\qquad \sigma(a,u)=\big(a,\,u+\lambda(a)\big).
	\end{equation*}
Since 	$\alpha(a,b)-\alpha'(a,b)=\lambda(a)+\lambda(b)-\lambda(a\cdot_G b)$, we obtain 
\begin{equation*}
	\sigma(a,u)\cdot'\sigma(b,v)=\big(a\cdot_G b,\;u+v+\lambda(a)+\lambda(b)+\alpha'(a,b)\big)=\big(a\cdot_G b,\;u+v+\alpha(a,b)+\lambda(a\cdot_G b)\big)=\sigma\big((a,u)\cdot(b,v)\big),
\end{equation*}
	Similarly, since $\gamma(a)-\gamma'(a)=T\lambda\big(a\cdot_G\huaB(a)\big)-\lambda(\huaB(a))-T\lambda(\huaB(a))$, we obtaion
$\sigma\circ\mathbb B=\mathbb B'\circ\sigma$.
	Thus $\sigma$ is an equivalence, and
	\begin{equation*}
	\eta:H^2(G,\huaB,V,T)\longrightarrow\mathrm{CExt}(G,\huaB,V,T),\qquad
	\eta\big([(\alpha,\gamma)]\big)=[(G\times V,\mathbb B)],
	\end{equation*}
	is well defined.
	
Let $[(\alpha,\gamma)]\in H^2(G,\huaB,V,T)$, and let
$(G\times V,\mathbb B)$ be the associated central extension of $(G,\huaB)$ by $(V,T)$. Take the section $s(a)=(a,0)$ of $(G\times V,\mathbb B)$.
	Then
	\begin{equation*}
	s(a)\cdot s(b)\cdot s(a\cdot_G b)^{-1}=\bigl(e_G,\alpha(a,b)\bigr),\qquad
	\mathbb B(s(a))\cdot s(\huaB(a))^{-1}=\bigl(e_G,\gamma(a)+T(\alpha(a,\huaB(a)))\bigr),
	\end{equation*}
	which means that $\zeta\circ \eta=\mathrm{id}$.
	
Let $(H,\huaB_H)$ be a central extension with section $s$ and
	associated cocycle $(\alpha,\gamma)$. The Rota-Baxter Lie group $(H,\huaB_H)$ is isomorphic to $(G\times V,\mathbb B)$, where the multiplication on $G\times V$ is
	\begin{equation*}
		(a, u)\cdot (b, v)=(a\cdot_G b, u+v+\alpha(a, b)), \quad \forall a,b\in G, u, v\in V
	\end{equation*}
	and $\mathbb B$ is given by
	\begin{equation*}
		\mathbb B(a, u)=\Big(\huaB(a), T(u)+\gamma(a)+T(\alpha(a, \huaB(a)))\Big).
	\end{equation*}
	Therefore, 
	$\eta\circ\zeta([(H,\huaB_H)])=[(H,\huaB_H)]$.
	
	Hence $\zeta$ is a bijection with inverse $\eta$, and the central extensions of $(G,\huaB)$ by
	$(V,T)$ are classified by $H^2(G,\huaB,V,T)$.
\end{proof}
\begin{rmk}
Belwal, Rathee and Singh \cite{BRS} developed a low-dimensional cohomology theory for relative Rota–Baxter groups and used its second cohomology group to classify abelian and central extensions. When the action is given by the adjoint action, their framework is closely related to that of Rota–Baxter groups. In contrast, we construct a cochain complex in arbitrary degrees via a mapping cone construction, which also can classify the central extensions of Rota-Baxter Lie groups.	It remains unclear whether this chain complex can be used to classify abelian Rota–Baxter group extensions.
\end{rmk}

\end{document}